\date{}
\theoremstyle{plain}
\newtheorem{thm}{Theorem}[section]
\newtheorem{lem}[thm]{Lemma}
\newtheorem{cor}[thm]{Corollary}
\newtheorem{prop}[thm]{Proposition}
\newtheorem*{rem}{Remark}
\newtheorem*{thmA}{Theorem A}
\newtheorem*{thmB}{Theorem B}
\newtheorem{defi}[thm]{Definition}
\begin{document}
\title{Weighted inequalities for variation operators associated with fractional Schrödinger semigroups}
\author{Yanhan Chen \footnote{Department of Mathematics, Graduate School of Science, Kyoto University, Kyoto 606-8502, Japan. \newline e-mail: \texttt{chen.yanhan.67s@st.kyoto-u.ac.jp}}}
\maketitle
\renewcommand{\abstractname}{Abstract}
\begin{abstract}
    Let $\{e^{-tL^{\alpha}}\}_{t>0}$ be the fractional Schr\"{o}dinger semigroup associated with $L=-\Delta+V$, where $V$ is a non-negatvie potential belonging to the reverse H\"{o}lder class. In this paper, we establish weighted boundedness properties of the variation operator related to $\{e^{-tL^{\alpha}}\}_{t>0}$, including weighted $L^{p}-L^{q}$ quantitative inequalities and mixed weak-type inequalities.
\end{abstract}

\textbf{Keywords:} Fractional heat semigroups, Weighted estimates, Variation operators.

\section{Introduction}
\quad Let us consider the Schr\"{o}dinger operator $L=-\Delta+V$ in $\mathbb{R}^{n}$ with $n\geq3$, where $\Delta$ denotes the Laplace operator $\Delta=\sum_{i=1}^{n}\partial^{2}_{i}$, and the potential $V$ is non-negative and belongs to the reverse H\"{o}lder class $RH_{s}$ for some $s>n/2$. That is, there exists $C>0$ such that
$$\left(\frac{1}{|B|}\int_{B}V(x)^{s}dx\right)^{\frac{1}{s}}\leq\frac{C}{|B|}\int_{B}V(x)dx,$$
for every ball $B$ in $\mathbb{R}^{n}$. It is well known that the Schr\"{o}dinger operator $L$ generates a Schr\"{o}dinger semigroup $\{e^{-tL}\}_{t>0}$ on $L^{2}$. We mainly focus on the fractional Schr\"{o}dinger semigroup $\{e^{-tL^{\alpha}}\}_{t>0}$ with $0<\alpha<1$, which is defined as the $C_{0}$-semigroup on $L^{2}$ whose generator is $-L^{\alpha}$.

In this paper we study the weighted inequalities for the variation operators associated with the fractional Schr\"{o}dinger semigroups. Let $\left\{T_{t}\right\}_{t>0}$ be a family of operators defined in some function spaces such that $\lim_{t\rightarrow0}T_{t}$ exists in some sense. For $a>2$, we denote the variation operator associated with $\{T_{t}\}_{t>0}$ as $V_{a}(T_{t})$, which is defined by
\begin{equation*}
    V_{a}(T_{t})(f)(x):=\sup_{\{t_{j}\}\searrow0}\left(\sum_{i=1}^{\infty}\left|T_{t_{i}}f(x)-T_{t_{i+1}}f(x)\right|^{a}\right)^{\frac{1}{a}}.\tag{1.1}
\end{equation*}
The investigation of variation operators originated in the work of Lépingle \cite{L76} and was further developed by Bourgain \cite{B89}, who derived variation estimates for ergodic averages in dynamical systems. Since then, 
inequalities for variation operators have been extensively studied in fields such as probability, harmonic analysis, and ergodic theory. A key motivation for examining such inequalities for a family of operators is that they provide insights into both the speed and the manner of convergence of the family of operators under consideration.

In recent years, extensive research has been conducted on inequalities for variation operators within the fields of harmonic analysis and ergodic theory. Campbell, Jones, Reinhold and Wierdl \cite{CJRW00} studied the oscillation and variation operator norms for certain classes of convolution operators, they established the $L^{p}$ inequality of variation operators associated with Hilbert transform. For commutators, Zhang and Wang \cite{ZW15} investigated the boundedness of the oscillation and variation operators for the commutators generated by Calderón-Zygmund singular integrals with Lipschitz functions in the weighted $L^{p}$ spaces. Subsequently, in \cite{ZW16}, the same authors also examined the boundedness properties of the oscillation and variation operators for Calderón-Zygmund singular integrals and the corresponding commutators on the weighted Morrey spaces. For further results on singular integrals, Chen, Ding, Hong and Liu \cite{CDHL18} obtained the variational inequalities for singular integrals and averaing operators with rough kernels. For the Schr\"{o}dinger settings, Betancor, Fariña, Harboure and Rodríguez-Mesa \cite{BFHR13} established $L^{p}$ bounds for the variation operator associated with Schr\"{o}dinger semigroups that
$$\|V_{a}(e^{-tL})f\|_{L^{p}\rightarrow L^{P}}<\infty\quad (p>1);\quad\|V_{a}(e^{-tL})f\|_{L^{1}\rightarrow L^{1,\infty}}<\infty.$$
Subsequently, Tang and Zhang \cite{TZ16} extended their result to the weighted setting, and Wen and Wu \cite{WW25} further explored the two-weighted cases. Most recently, Wang, Zhao, Li and Liu \cite{WZLL25} studied the weighted $L^{p}$ boundedness of the variation operator associated with fracitonal Schr\"{o}dinger semigroups. Specifically, they established that
\begin{thmA}
    \textnormal{(Wang, Zhao, Li and Liu \cite{WZLL25})} Let $a>2$, $0<\alpha<1$. Assume that $1<p<\infty$, $w\in A_{p}^{\rho}$. Then the variation operator $V_{a}(e^{-tL^{\alpha}})$ is bounded from $L^{p}(w)$ into itself.
\end{thmA}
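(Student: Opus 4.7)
My plan is to prove Theorem A by casting $V_a(e^{-tL^\alpha})$ as a vector-valued singular integral and invoking the Calderón-Zygmund theory adapted to the critical radius function $\rho$, in the style of Bongioanni-Harboure-Salinas. The starting point is the subordination formula
$$e^{-tL^\alpha}f(x) = \int_0^\infty e^{-sL}f(x)\,\eta_{t,\alpha}(s)\,ds,$$
where $\eta_{t,\alpha}$ is the one-sided $\alpha$-stable subordinator density. This expresses the kernel of $e^{-tL^\alpha}$ as $K_t^\alpha(x,y)=\int_0^\infty K_s^L(x,y)\,\eta_{t,\alpha}(s)\,ds$. Combining the Dziubanski-Zienkiewicz refinement $K_s^L(x,y)\lesssim (1+\sqrt{s}/\rho(x))^{-N}s^{-n/2}e^{-|x-y|^2/(cs)}$ of the Gaussian bound with the known decay of $\eta_{t,\alpha}$, I would derive size and Hölder regularity estimates for $K_t^\alpha(x,y)$ in both the space and time variables, each carrying an extra factor $(1+|x-y|/\rho(x))^{-N}$.

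With these kernel estimates in hand, the $L^2$-boundedness of $V_a(e^{-tL^\alpha})$ follows from the Lépingle-type inequality $V_a(T_t)f(x)\lesssim (\int_0^\infty|t\partial_tT_tf(x)|^2\,dt/t)^{1/2}$, valid for $a>2$, together with the spectral square-function bound $\|(\int_0^\infty|tL^\alpha e^{-tL^\alpha}f|^2\,dt/t)^{1/2}\|_2\lesssim\|f\|_2$, which comes from the functional calculus for the self-adjoint sectorial operator $L^\alpha$ on $L^2$. To pass to $L^p(w)$ for $w\in A_p^\rho$, I view $f\mapsto\{e^{-t_iL^\alpha}f - e^{-t_{i+1}L^\alpha}f\}_i$ as an operator-valued singular integral into the space carrying the $a$-variation norm. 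A Hölder interpolation between the pointwise $L^\infty$-in-$t$ bound and the $L^1$-in-$t$ bound on $|\partial_tK_t^\alpha|$ yields the "variation kernel" estimate $\mathcal{V}_aK(x,y)\lesssim |x-y|^{-n}(1+|x-y|/\rho(x))^{-N}$, and a parallel Hölder-in-$x$ estimate. These are precisely the size and smoothness conditions for a $\rho$-adapted Calderón-Zygmund kernel, so combined with the $L^2$ bound, the adapted Calderón-Zygmund theorem delivers $L^p(w)$ boundedness for every $1<p<\infty$ and $w\in A_p^\rho$.

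The main obstacle will be deriving the regularity estimates for $K_t^\alpha$ with the correct $\rho$-decay after subordination. The Schrödinger heat kernel $K_s^L$ has only limited smoothness tied to the reverse Hölder exponent of $V$, so one must interchange the space/time derivatives with the subordinator integral while carefully tracking both the polynomial decay of order $n+2\alpha$ in $(t^{1/(2\alpha)}+|x-y|)^{-1}$ and the $\rho$-exponential decay, uniformly in $t$ and in the choice of sequence $\{t_i\}\searrow 0$. A secondary difficulty is executing the Hölder interpolation for general $a>2$: the naive $L^\infty$-in-$t$ and $L^1$-in-$t$ bounds on $\partial_tK_t^\alpha$ are each too crude in isolation, and the balancing between them must precisely match the $a$-variation norm so that the resulting singular kernel has the homogeneity $|x-y|^{-n}$ expected by the adapted Calderón-Zygmund machinery.
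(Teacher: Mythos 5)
Your proposal follows the Calder\'on--Zygmund route originally taken by Wang, Zhao, Li and Liu in \cite{WZLL25} (and by Tang--Zhang, Betancor et al.\ for the non-fractional case): view $V_a(e^{-tL^\alpha})$ as a $\rho$-adapted vector-valued singular integral, establish $L^2$ boundedness, then derive size and smoothness estimates for the ``variation kernel'' and invoke an adapted CZ theorem to reach every $A_p^\rho$ weight. The present paper deliberately avoids that route: instead of kernel-smoothness estimates, it proves a pointwise sparse domination (Lemma~3.1) of $V_a(e^{-tL^\alpha})$ by $\sum_j \mathcal{A}_{\mathcal{S}_j}^{\rho,\tau}(f\chi_{3Q_j}) + M^{\rho,\gamma}f$, built from (a) a covering by critical cubes $Q_j$, (b) a direct maximal-function bound for the global part via the kernel decay of Lemma~2.7 alone, and (c) Lerner's sparse domination principle (Proposition~2.6) applied to the local part, whose grand maximal truncation is shown to be weak $(1,1)$ via Auscher's criterion (Lemma~3.2). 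This sparse route is stronger than what your CZ approach yields: it gives quantitative bounds $[w]_{A_p^{\rho,\theta}}^{\max\{1,1/(p-1)\}}$ and two-weight estimates (Theorem~1.1(ii)), and the paper's remark after the proof explicitly notes that it needs only the kernel size/time-derivative estimates of Lemma~2.7, whereas the CZ approach you describe requires the harder spatial H\"older estimates $|\partial_t^m K_{\alpha,t}^L(x+h,y)-\partial_t^m K_{\alpha,t}^L(x,y)|$.

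There is also a concrete gap in your $L^2$ step. The pointwise inequality
$$V_a(T_t)f(x)\lesssim \left(\int_0^\infty|t\,\partial_tT_tf(x)|^2\,\frac{dt}{t}\right)^{1/2}$$
is false for $a>2$. The only pointwise bound available in this direction is the $L^1$-in-$t$ estimate $V_a(T_t)f(x)\le\int_0^\infty|\partial_tT_tf(x)|\,dt$, which is precisely what the paper uses for the global part, and it does not follow from the square function by Cauchy--Schwarz (the remaining factor $\bigl(\int_0^\infty dt/t\bigr)^{1/2}$ diverges). A square-function pointwise bound controls only the ``short variation'' over dyadic blocks $[2^k,2^{k+1}]$; the ``long variation'' along $\{2^k\}$ must be handled separately, classically by transferring L\'epingle's martingale inequality. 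So your $L^2$ input needs either this long/short decomposition or a direct citation of the $L^2$ bound already proved in \cite{WZLL25}. Once that is repaired, your plan reproduces the proof in \cite{WZLL25}, but not the paper's argument, and it does not deliver the quantitative or two-weight conclusions that motivate the paper's different strategy.
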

\begin{thmB}
    \textnormal{(Wang, Zhao, Li and Liu \cite{WZLL25})} Let $a>2$, $0<\alpha<1$. Then the variation operator $V_{a}(e^{-tL^{\alpha}})$ is bounded from $L^{1}$ to $L^{1,\infty}$.
\end{thmB}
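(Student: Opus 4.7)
The plan is to follow the Calderón–Zygmund strategy: decompose the $L^{1}$ datum at height $\lambda$, handle the good part via the $L^{2}$ bound (which is the case $p=2$, $w\equiv 1$ of Theorem A, since constant weights belong to every $A_{p}^{\rho}$ class), and handle the bad part via a smoothness/cancellation estimate on the kernel of $e^{-tL^{\alpha}}$, derived from Bochner's subordination formula together with standard Gaussian bounds on the Schrödinger heat kernel.

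Concretely, for $f\in L^{1}$ and $\lambda>0$ I would write $f=g+\sum_{j}b_{j}$ in the standard way, with $\|g\|_{\infty}\lesssim\lambda$, $\|g\|_{1}\le\|f\|_{1}$, each $b_{j}$ supported in a cube $Q_{j}$ satisfying $\int b_{j}=0$ and $\|b_{j}\|_{1}\lesssim\lambda|Q_{j}|$, and $\sum_{j}|Q_{j}|\lesssim\lambda^{-1}\|f\|_{1}$. Chebyshev combined with Theorem A gives
$$\bigl|\{V_{a}(e^{-tL^{\alpha}})g>\lambda/2\}\bigr|\lesssim\lambda^{-2}\|V_{a}(e^{-tL^{\alpha}})g\|_{2}^{2}\lesssim\lambda^{-2}\|g\|_{\infty}\|g\|_{1}\lesssim\lambda^{-1}\|f\|_{1}.$$
Setting $Q_{j}^{*}=2\sqrt{n}\,Q_{j}$ and $E=\bigcup_{j}Q_{j}^{*}$, one has $|E|\lesssim\lambda^{-1}\|f\|_{1}$, so the weak $(1,1)$ bound reduces to proving $\sum_{j}\int_{(Q_{j}^{*})^{c}}V_{a}(e^{-tL^{\alpha}})b_{j}\lesssim\|f\|_{1}$. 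Using $\int b_{j}=0$, the trivial inequality $\|\cdot\|_{\ell^{a}}\le\|\cdot\|_{\ell^{1}}$ for $a\ge 1$, and Minkowski's integral inequality, this reduces in turn to the uniform kernel-variation estimate
$$\int_{|x-y_{j}|>2\sqrt{n}\,r_{j}}\int_{0}^{\infty}\bigl|\partial_{t}[K_{t}^{\alpha}(x,z)-K_{t}^{\alpha}(x,y_{j})]\bigr|\,dt\,dx\le C,\qquad z\in Q_{j},$$
where $y_{j},r_{j}$ denote the center and side-length of $Q_{j}$ and $K_{t}^{\alpha}$ is the integral kernel of $e^{-tL^{\alpha}}$.

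The main obstacle is this kernel-variation estimate. My plan is to invoke Bochner's subordination formula $K_{t}^{\alpha}(x,y)=\int_{0}^{\infty}\eta_{t}^{\alpha}(s)\,k_{s}(x,y)\,ds$, where $\eta_{t}^{\alpha}$ is the one-sided $\alpha$-stable density and $k_{s}$ is the heat kernel of $L$, together with the Feynman–Kac bound $0\le k_{s}(x,y)\le(4\pi s)^{-n/2}e^{-|x-y|^{2}/(4s)}$ and its Hölder-in-$y$ counterpart $|k_{s}(x,z)-k_{s}(x,y_{j})|\lesssim(|z-y_{j}|/\sqrt{s})^{\delta}\,s^{-n/2}e^{-c|x-y_{j}|^{2}/s}$ for some $\delta\in(0,1)$. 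Differentiating in $t$ throws $\partial_{t}$ onto $\eta_{t}^{\alpha}$; the self-similarity $\eta_{t}^{\alpha}(s)=t^{-1/\alpha}g_{\alpha}(st^{-1/\alpha})$ yields size and decay bounds on $\partial_{t}\eta_{t}^{\alpha}$, and splitting the $(s,t)$-integration according to whether $s$ or $t^{1/\alpha}$ dominates $|x-y_{j}|^{2}$ should produce a Dini-type pointwise bound
$$\int_{0}^{\infty}\bigl|\partial_{t}[K_{t}^{\alpha}(x,z)-K_{t}^{\alpha}(x,y_{j})]\bigr|\,dt\lesssim\frac{r_{j}^{\delta}}{|x-y_{j}|^{n+\delta}},$$
whose integral over $|x-y_{j}|>2\sqrt{n}\,r_{j}$ is a bounded constant. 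Summing over $j$ gives $\sum_{j}\int_{(Q_{j}^{*})^{c}}V_{a}(e^{-tL^{\alpha}})b_{j}\lesssim\sum_{j}\|b_{j}\|_{1}\lesssim\|f\|_{1}$, which together with the good-part estimate and the bound on $|E|$ closes the argument.
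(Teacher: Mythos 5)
The paper does not prove Theorem B; it is cited from Wang, Zhao, Li and Liu \cite{WZLL25} and is used as a black-box input in the proof of Lemma 3.1 (where weak $(1,1)$ boundedness of $V_{a}(e^{-tL^{\alpha}})$ is needed to invoke Lerner's Proposition 2.6). Your Calder\'{o}n--Zygmund proposal is correct in principle and in fact matches the original route of \cite{WZLL25}: as Remark 3.4 notes, that proof relies on kernel-smoothness estimates of the form $|\partial^{m}K_{\alpha,t}^{L}(x+h,y)-\partial^{m}K_{\alpha,t}^{L}(x,y)|$, which is precisely the H\"{o}rmander-type bound you aim at via subordination together with the H\"{o}lder-in-$y$ estimate for the Schr\"{o}dinger heat kernel (with exponent $\delta=\min\{1,2-n/s\}$ when $V\in RH_{s}$, $s>n/2$). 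Your reductions (CZ decomposition, Chebyshev with the unweighted $L^{2}$ bound for the good part, cancellation and Minkowski for the bad part) are standard and sound; the genuine work is in the subordination step, where after transferring $\partial_{t}$ onto $\eta_{t}^{\alpha}$ you still need the decay asymptotics of $\eta_{t}^{\alpha}$ and of its $t$-derivative at both ends of $(0,\infty)$, and your sketch only gestures at this computation. By contrast, the weak-type machinery the present paper actually develops applies Auscher's criterion (Lemma 3.2) with $A_{r}=e^{-r^{2\alpha}L^{\alpha}}$ as approximation to the identity, which requires only the size bounds of Lemma 2.7 and never touches H\"{o}lder continuity of the kernel; applied directly to $T=V_{a}(e^{-tL^{\alpha}})$ (rather than to $M_{V_{a}(e^{-tL^{\alpha}})}$ as in the paper), the same verification of (3.2)--(3.3) would yield Theorem B with strictly less kernel input than your proposal demands. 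So your approach is correct but more demanding on the kernel; the Auscher-based approach is precisely what allows the author to dispense with the smoothness estimates altogether.
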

Our first result is a generalization of Theorem A. We establish quantitative weighted $L^{p}$ boundedeness for $V_{a}(e^{-tL^{\alpha}})$. More precisely, we prove the following:
\begin{thm}
    Let $n\geq3$, $V\in RH_{s}$ with $s>n/2$ and $L=-\Delta+V$. Let $a>2$, $1<p<\infty$, $0<\alpha<1$ and $\rho$ be the critical radius function associated with $V$, then\\
\textnormal{(i)} For $w\in A_{p}^{\rho,\theta}$, we have 
    $$\|V_{a}(e^{-tL^{\alpha}})f\|_{L^{p}(w)}\lesssim[w]_{A_{p}^{\rho,\theta}}^{\textnormal{max}\left\{1,\frac{1}{p-1}\right\}}\|f\|_{L^{p}(w)}.$$
\textnormal{(ii)} Let $\Psi$ be a Young function such that $\bar{\Psi}\in B_{p}$, and a pair $(u,v)$ of weights satisfies
    $$[u,v]_{\Psi,p,\rho,\theta}:=\sup_{Q}\|v^{\frac{1}{p}}\|_{p,Q}\|u^{-\frac{1}{p}}\|_{\Psi,Q}\psi_{\theta}(Q)^{-1}<\infty.$$
We have
$$\|V_{a}(e^{-tL^{\alpha}})f\|_{L^{p}(v)}\lesssim[u,v]_{\Psi,p.\rho,\theta}[\bar{\Psi}]_{B_{p}}^{\frac{1}{p}}\|f\|_{L^{p}(u)}.$$
\end{thm}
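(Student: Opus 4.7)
My plan is to prove Theorem 1.1 by establishing a pointwise sparse domination for $V_{a}(e^{-tL^{\alpha}})$ adapted to the critical radius function $\rho$, from which both (i) and (ii) follow by now-standard arguments.

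First, using the subordination formula $e^{-tL^{\alpha}}f=\int_{0}^{\infty}e^{-sL}f\,\eta_{t}^{\alpha}(s)\,ds$, I would transfer the problem to the heat semigroup $\{e^{-sL}\}_{s>0}$ and exploit the known Gaussian upper bounds together with the extra exponential decay factor of the form $(1+s^{1/2}/\rho(x))^{-N}$ for arbitrarily large $N$. This yields pointwise estimates for the kernel $K_{t_{i},t_{i+1}}(x,y)$ of each increment $e^{-t_{i}L^{\alpha}}-e^{-t_{i+1}L^{\alpha}}$, which in turn control $V_{a}(e^{-tL^{\alpha}})$ through its $\ell^{a}$-type definition; the analogous analysis was carried out in \cite{WZLL25,BFHR13} and I would follow their lines for the underlying kernel bounds.

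Second, I would split $V_{a}(e^{-tL^{\alpha}})f=V_{a}^{\mathrm{loc}}f+V_{a}^{\mathrm{glob}}f$, where the local piece corresponds to $|x-y|\leq\rho(x)$ and the global piece to $|x-y|>\rho(x)$. For $V_{a}^{\mathrm{loc}}$, the kernel behaves essentially as in the classical fractional-Laplacian case, so the variation operator inherits a Lerner-type sparse bound $V_{a}^{\mathrm{loc}}f(x)\lesssim\sum_{Q\in\mathcal{S}}\|f\|_{\Psi,Q}\chi_{Q}(x)$. For $V_{a}^{\mathrm{glob}}$, the decay in $s^{1/2}/\rho(x)$, combined with a geometric-series argument over annuli $2^{k}\rho(x)\leq|x-y|<2^{k+1}\rho(x)$, produces the weighting factor $\psi_{\theta}(Q)$ and a pointwise bound by a $\rho$-adapted averaging operator. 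Combining these steps yields the sparse estimate
\begin{equation*}
V_{a}(e^{-tL^{\alpha}})f(x)\lesssim\sum_{Q\in\mathcal{S}}\psi_{\theta}(Q)\,\|f\|_{\Psi,Q}\,\chi_{Q}(x).
\end{equation*}

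Finally, part (i) follows by specializing to $\Psi(t)=t^{p}$ and invoking the quantitative Buckley-type inequality for $\rho$-adapted sparse operators, giving the exponent $\max\{1,1/(p-1)\}$ on $[w]_{A_{p}^{\rho,\theta}}$; part (ii) follows from the two-weight framework of Cruz-Uribe, Martell and P\'erez, where the hypothesis $\bar{\Psi}\in B_{p}$ produces the factor $[\bar{\Psi}]_{B_{p}}^{1/p}$ when passing from $\Psi$-averages to the averaged constant $[u,v]_{\Psi,p,\rho,\theta}$. The main obstacle I expect is establishing the sparse domination with the correct Young function $\Psi$ and critical-radius factor $\psi_{\theta}$ simultaneously: the non-linearity of $V_{a}$ (the $\ell^{a}$ norm inside the supremum) makes the usual Calder\'on-Zygmund three-step argument subtle, since the smoothness of the kernel must be measured in a way compatible with the $a$-variation norm, as in \cite{CJRW00,BFHR13}, and this compatibility must survive the local-global splitting. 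Once the sparse bound is in hand, the two quantitative consequences are essentially routine.
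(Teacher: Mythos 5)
Your overall strategy---a $\rho$-adapted sparse domination followed by weighted estimates for sparse operators---is the right idea and is indeed what the paper does, but the sparse bound you propose is not the correct one and you omit the step that is actually the crux of the argument.

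First, the displayed sparse estimate
\begin{equation*}
V_{a}(e^{-tL^{\alpha}})f(x)\lesssim\sum_{Q\in\mathcal{S}}\psi_{\theta}(Q)\,\|f\|_{\Psi,Q}\,\chi_{Q}(x)
\end{equation*}
has the critical-radius factor on the \emph{growing} side. The sparse operator one can bound on $L^{p}(w)$ for $w\in A_{p}^{\rho,\theta}$ is $\mathcal{A}_{\mathcal{S}}^{\rho,\sigma}f=\sum_{Q\in\mathcal{S}}\langle f\rangle_{3Q}\psi_{\sigma}(Q)^{-1}\chi_{Q}$, with a \emph{damping} factor $\psi_{\sigma}(Q)^{-1}$; a sparse operator with $\psi_{\theta}(Q)$ as a multiplier would blow up on large cubes far from the critical scale and would not be bounded for $A_{p}^{\rho,\theta}$ weights. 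Similarly, there is no reason the pointwise domination should produce Orlicz averages $\|f\|_{\Psi,Q}$: the kernel of $e^{-tL^{\alpha}}$ has no commutator-type structure, and the Lerner-type argument (via Proposition 2.6 of the paper) yields plain averages $\langle f\rangle_{3P}$. The Young function $\Psi$ enters only through the two-weight boundedness theorem for the sparse operator and for $M^{\rho,\sigma}$ (Lemmas 2.3(ii), 2.5(ii)), not through the pointwise bound; trying to insert $\Psi$-averages into the pointwise estimate is both unnecessary and unjustified. Also, part (i) is not obtained by ``specializing to $\Psi(t)=t^{p}$'' --- in the paper it comes from the one-weight sparse lemma (Lemma 2.5(i)) combined with Lemma 2.3(i) applied to the $M^{\rho,\gamma}f$ term with $\gamma=\theta/(p-1)$.

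Second, and more fundamentally, there is a missing term and a missing proof. The paper's pointwise estimate (Lemma 3.1) has the form $V_{a}(e^{-tL^{\alpha}})f(x)\lesssim\sum_{j}\mathcal{A}_{\mathcal{S}_{j}}^{\rho,\tau}(f\chi_{3Q_{j}})(x)+M^{\rho,\gamma}f(x)$: the sparse collections $\mathcal{S}_j$ are localized to the Dziuba\'nski--Zinkiewicz critical cubes $Q_j$ (so the decomposition is of $f$ into $f\chi_{3Q_j}$ and $f\chi_{\mathbb{R}^n\setminus 3Q_j}$ over these fixed cubes, not a kernel split at $|x-y|=\rho(x)$), and the global part genuinely produces the separate maximal term $M^{\rho,\gamma}f$, which cannot be absorbed into the sparse sum. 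You cannot get the quantitative constants of the theorem without keeping these two pieces separate, because they are bounded by two different lemmas. Finally, to apply Lerner's sparse domination to the local pieces, one must verify that the grand maximal truncation $M_{V_a(e^{-tL^{\alpha}})}$ is of weak type $(1,1)$. This is the most technical part of the paper's proof: it requires writing $M_{V_a}\leq M_1+M_2+M_3$ via $P_t(L^\alpha)=e^{-tL^\alpha}$ and $S_t(L^\alpha)=I-P_t(L^\alpha)$, controlling $M_2,M_3$ by $Mf$ using the pointwise kernel bounds of Lemma~2.7, and handling $M_1$ via Auscher's interpolation criterion (Lemma~3.2) after showing the crucial estimate $V_a(e^{-tL^\alpha})P_{r_Q^{2\alpha}}(L^\alpha)f(\xi)\lesssim M(V_a(e^{-tL^\alpha})f)(x)$. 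Your proposal acknowledges that the nonlinearity of $V_a$ makes this ``subtle'' but offers no concrete route; without this step the sparse domination does not get off the ground.
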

Regarding the weak type estimate for $V_{a}(e^{-tL^{\alpha}})$, we turn to the study of a more genernal mixed weak type inequality. In 1985, E. Sawyer \cite{S85} proved that if both $u$ and $v$ are $A_{1}$ weights, then the inequality
\begin{equation}
uv\left(\left\{x\in\mathbb{R}:\frac{M(fv)(x)}{v(x)}>t\right\}\right)\lesssim\frac{1}{t}\int_{\mathbb{R}}|f(y)|duv(y)\tag{1.2}
\end{equation}
holds for every $t>0$, where $M$ denotes the classical Hardy-Littlewood maximal operator. This estimate is highly non-trivial extension of the classical weak type $(1,1)$ inequality for $M$ due to the presence of the weight function $v$ within the distribution set. Moreover, several other noteworthy applications exist, including estimates for multilinear operators and commutators with BMO functions. Subsequently, Cruz-Uribe, Martell and P\'{e}rez \cite{CMP05} extended this result to higher dimensions and under weaker condition on the weight. They proved that for $u\in A_{1}$ and $v\in A_{\infty}(u)$, it holds that
\begin{equation}
    uv\left(\left\{x\in\mathbb{R}^{n}:\frac{T(fv)(x)}{v(x)}>t\right\}\right)\lesssim\frac{1}{t}\int_{\mathbb{R}^{n}}|f(y)|duv(y),\tag{1.3}
\end{equation}
where operator $T$ is either the Hardy-Littlewood maximal operator or a Calder\'{o}n-Zygmund singular integral. Most recently, Li, Ombrosi and P\'{e}rez \cite{LOP19} further extended (1.3) to a broader class of weights. They confirmed inequality (1.3) for the maximal operator under the weakened weight condition $u\in A_{1}$ and $v\in A_{\infty}$, there by resolving a long standing conjecture proposed by Sawyer. For classes of operators and weights associated
 with a critical radius function, Berra, Pradolini and Quijano \cite{BPQ25} established mixed weak type inequalities for the maximal operator. Subsequently, Wen and Wu \cite{WW25} studied inequality (1.3) for the operator $V_{a}(e^{-tL})$. They demonstrated that for $u\in A_{1}^{\rho}$ and $v\in A_{\infty}^{\rho}(u)$, the inequality holds that
 \begin{equation}
 uv\left(\left\{x\in\mathbb{R}^{n}:\frac{V_{a}(e^{-tL})(fv)(x)}{v(x)}>t\right\}\right)\lesssim\frac{1}{t}\int_{\mathbb{R}^{n}}|f(y)|duv(y).\tag{1.4}
 \end{equation}
Our second result addresses mixed weak type estimates for the variation operator associated with fractional Schr\"{o}dinger semigroups. Specifically, we prove the following:
\begin{thm}
Let $n\geq3$, $V\in RH_{s}$ with $s>n/2$ and $L=-\Delta+V$. Let $a>2$, $0<\alpha<1$ and $\rho$ be the critical radius function associated with $V$. Then for $u\in A_{1}^{\rho}$ and $v\in A_{\infty}^{\rho}$, there exist $C>0$ such that
$$\left\|\frac{V_{a}(e^{-tL^{\alpha}})(fv)}{v}\right\|_{L^{1,\infty}(uv)}\leq C\|f\|_{L^{1}(uv)}.$$
\end{thm}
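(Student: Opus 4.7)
The strategy will be to adapt the argument of Wen and Wu~\cite{WW25} (which handled $\alpha=1$) to the fractional setting via the subordination formula
\[
e^{-tL^{\alpha}}f=\int_{0}^{\infty}\eta_{t}^{\alpha}(s)\,e^{-sL}f\,ds,
\]
where $\eta_{t}^{\alpha}$ is the one-sided $\alpha$-stable density. This identity transfers the known pointwise and Lipschitz bounds for the classical Schr\"odinger kernel in terms of the critical radius $\rho$ to the fractional setting. The first step is to record off-diagonal estimates $|K_{t_{i},t_{i+1}}(x,y)|\lesssim\Phi_{N}(x,y)$ for the kernel of $e^{-t_{i}L^{\alpha}}-e^{-t_{i+1}L^{\alpha}}$, together with H\"older-type increments in $y$, in which the factor $(1+|x-y|/\rho(x))^{-N}$ can be taken for arbitrary $N$. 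Summing these pointwise in $i$ and absorbing the $\ell^{a}$ norm in the variation then produces bounds on the ``effective kernel'' of $V_{a}(e^{-tL^{\alpha}})$ that are of standard Calder\'{o}n--Zygmund type inside the critical radius and that decay faster than any polynomial beyond it.

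With those kernel estimates in hand, the core of the argument is a Calder\'{o}n--Zygmund decomposition adapted to the measure $d\mu=uv\,dx$, in the spirit of Li--Ombrosi--P\'erez~\cite{LOP19} and its $\rho$-localized version by Berra--Pradolini--Quijano~\cite{BPQ25}. At level $t$ I would decompose $f$ via a maximal operator $M_{uv}^{\rho}$ associated with $uv$ and the critical radius, obtaining $f=g+b$ with $b=\sum_{j}b_{j}$, $\mathrm{supp}\,b_{j}\subset Q_{j}$, $\int b_{j}\,d\mu=0$, and $\sum_{j}\mu(Q_{j})\lesssim t^{-1}\|f\|_{L^{1}(\mu)}$. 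The good part is handled by Chebyshev together with the $L^{2}(u)$ boundedness of $V_{a}(e^{-tL^{\alpha}})$ supplied by Theorem~1.1(i) (after noting $A_{1}^{\rho}\subset A_{2}^{\rho,\theta}$), using the $A_{\infty}^{\rho}$ hypothesis on $v$ to compare $uv$-averages with $u$-averages.

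For the bad part I would split the stopping cubes $\{Q_{j}\}$ into \emph{local} cubes, with $\ell(Q_{j})\le\rho(x_{Q_{j}})$, and \emph{global} cubes. On local cubes the cancellation $\int b_{j}\,d\mu=0$ combined with the H\"older estimate of the kernel yields the usual Calder\'{o}n--Zygmund gain of $|Q_{j}|/|x-x_{Q_{j}}|^{n+\delta}$, which, integrated against $uv$ outside a suitable expansion $Q_{j}^{*}$, produces the required control $\|f\|_{L^{1}(\mu)}/t$ thanks to the $A_{1}^{\rho}$ property of $u$ and a doubling estimate for $uv$. On global cubes no cancellation is required: the extra decay $(1+|x-y|/\rho(x))^{-N}$ already supplies summable geometric series over dyadic annuli around $Q_{j}$.

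The main obstacle is the interplay between the sublinearity of $V_{a}$ and the two weights. Because $V_{a}$ is not linear one cannot linearize and dualize, so the pointwise estimates must be carried out uniformly in the partition $\{t_{i}\}$ inside the variation; this is the reason the kernel bounds summarized above must be derived for \emph{increments} of the semigroup rather than for the semigroup itself. The factor $1/v$ present in the mixed weak-type norm also blocks a direct appeal to weighted $L^{2}$ bounds. To circumvent this, I would perform a dyadic level-set decomposition in $v$, writing
\[
\bigl\{x:V_{a}(e^{-tL^{\alpha}})(fv)(x)>tv(x)\bigr\}=\bigcup_{k\in\mathbb{Z}}\bigl\{x:2^{k}<v(x)\le 2^{k+1},\ V_{a}(e^{-tL^{\alpha}})(fv)(x)>t2^{k}\bigr\},
\]
and on each slice reduce the problem to a weighted weak-type $(1,1)$ inequality for $V_{a}(e^{-tL^{\alpha}})$ against the $A_{1}^{\rho}$ weight $u$, which in turn follows from Theorem~B and Theorem~1.1 via standard extrapolation. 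Closing this summation in $k$, using that $v\in A_{\infty}^{\rho}$ supplies a reverse H\"older estimate compatible with $u$, is the delicate part, but once accomplished it yields the claimed bound.
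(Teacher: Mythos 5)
Your proposal diverges from the paper's argument in a fundamental way, and the divergence contains a genuine gap. The paper does \emph{not} run a Calder\'{o}n--Zygmund decomposition adapted to $d\mu=uv\,dx$; it instead proves the pointwise sparse domination of Lemma~3.1, which reduces everything to the sparse operators $\mathcal{A}_{\mathcal{S}_{j}}^{\rho,\tau}$ and to $M^{\rho,\gamma}$, and then invokes the extrapolation theorem of Berra--Pradolini--Quijano (Lemma~4.2) together with the good-$\lambda$ comparison of Lemma~4.3 to transfer the known mixed weak-type bound for $M^{\rho,\sigma}$ (Lemma~4.1) to each $\mathcal{A}_{\mathcal{S}_{j}}^{\rho,\tau}$. (Incidentally, your description of Wen--Wu is also off: per the paper, they controlled the local part by $V_{a}(e^{t\Delta})$ via a new weight class $A_{p}^{\rho,\mathrm{loc}}$, not by a $uv$-adapted CZ decomposition.)

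The concrete gap is the ``dyadic level-set decomposition in $v$'' step. Writing
\[
\bigl\{x:V_{a}(e^{-tL^{\alpha}})(fv)(x)>tv(x)\bigr\}\subseteq\bigcup_{k\in\mathbb{Z}}\bigl\{x:2^{k}<v(x)\le 2^{k+1},\ V_{a}(e^{-tL^{\alpha}})(fv)(x)>t2^{k}\bigr\}
\]
and applying on each slice the weighted weak $(1,1)$ bound against $u$ gives
\[
uv\bigl(E_{k}\bigr)\le 2^{k+1}u\bigl(\{V_{a}(fv)>t2^{k}\}\bigr)\lesssim\frac{2^{k+1}}{t2^{k}}\|fv\|_{L^{1}(u)}\sim\frac{1}{t}\|fv\|_{L^{1}(u)},
\]
a bound that is constant in $k$; the sum over $k\in\mathbb{Z}$ diverges. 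This is not a mere ``delicate'' closing step --- it is precisely the obstruction that makes Sawyer-type inequalities non-trivial and that Li--Ombrosi--P\'erez and Berra--Pradolini--Quijano had to circumvent by entirely different means (a stopping-time construction and an extrapolation theorem, respectively). The same obstruction blocks the treatment of your ``good part'': knowing $V_{a}$ is $L^{2}(u)$-bounded does not control $uv(\{V_{a}(gv)/v>t\})$, because of the $v$ inside and the $1/v$ outside. Without supplying an actual replacement for these steps --- for example a sparse bound plus the BPQ extrapolation, as the paper does, or a genuine $uv$-adapted stopping-time construction \`a la LOP --- the proposal does not prove the theorem.
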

\begin{rem}
    The conditions $u\in A_{1}^{\rho}$ and $v\in A_{\infty}^{\rho}$ are weaker than $u\in A_{1}^{\rho}$ and $v\in A_{\infty}^{\rho}(u)$ in \textnormal{\cite{WW25}}. Indeed, the latter implies there exist $C>0$ and $\eta,\delta,\epsilon>0$ such that
    $$\frac{v(E)}{v(Q)}\leq C\left(\frac{u(E)}{u(Q)}\right)^{\epsilon}\psi_{\eta}(Q)\leq C\left(\frac{|E|}{|Q|}\right)^{\epsilon\delta}\psi_{\eta}(Q)$$
for any cube $Q$ and $E\subseteq Q$, which further implies that $v\in A_{\infty}^{\rho}$ (see the detailed definition and properties of weight class $A_{p}^{\rho}$ in Section 2.1).  
\end{rem}
By setting $v=1$, we obtain a weighted weak type $(1,1)$ inequality for $V_{a}(e^{-tL^{\alpha}})$, which serves as a weighted generalization of Theorem B.
\begin{cor}
    Let $n\geq3$, $V\in RH_{s}$ with $s>n/2$ and $L=-\Delta+V$. Let $a>2$, $0<\alpha<1$ and $\rho$ be the critical radius function associated with $V$. Then for $u\in A_{1}^{\rho}$, there exist $C>0$ such that
$$\left\|{V_{a}(e^{-tL^{\alpha}})f}\right\|_{L^{1,\infty}(u)}\leq C\|f\|_{L^{1}(u)}.$$
\end{cor}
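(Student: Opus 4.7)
The plan is to derive this corollary as an immediate specialization of Theorem 1.2 with $v \equiv 1$. The two ingredients I need are: (a) that the constant weight $v=1$ belongs to $A_\infty^\rho$, and (b) that the quantities appearing in Theorem 1.2 simplify correctly when $v=1$, matching the statement of the corollary.

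For (a), I would observe that $A_\infty \subset A_\infty^\rho$: the $A_p^\rho$ classes are defined by a Muckenhoupt-type condition in which the right-hand side is additionally multiplied by a factor $\psi_\theta(Q) \geq 1$ depending on the critical radius function, so every classical Muckenhoupt weight is automatically an $A_p^\rho$ weight. Since $1 \in A_1 \subset A_\infty$, we conclude $1 \in A_\infty^\rho$, and the hypothesis on $v$ in Theorem 1.2 is satisfied.

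For (b), substituting $v \equiv 1$ into Theorem 1.2 yields $uv = u$, $fv = f$, and $V_a(e^{-tL^\alpha})(fv)/v = V_a(e^{-tL^\alpha}) f$. The weak-type measure on the left and the integral on the right both collapse to the weight $u$, and the inequality of Theorem 1.2 reduces precisely to
$$\|V_a(e^{-tL^\alpha}) f\|_{L^{1,\infty}(u)} \leq C \|f\|_{L^1(u)},$$
which is the statement to be proved.

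Since this deduction is purely formal, there is no genuine obstacle; the entire mathematical content is absorbed into the proof of Theorem 1.2. I would therefore present the argument simply as a one-line specialization following the proof of Theorem 1.2, with a brief justification that $1 \in A_\infty^\rho$.
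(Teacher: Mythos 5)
Your proposal is exactly the paper's own deduction: the paper introduces Corollary~1.3 with the single remark ``By setting $v=1$,'' and your argument spells out the same specialization of Theorem~1.2, including the (trivial but worth noting) check that the constant weight lies in $A_\infty^\rho$. Correct and essentially identical to the paper's route.
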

\begin{rem}
We prove Theorem 1.2 using an extrapolation method (see details in Section 4). Although a careful examination of the extrapolation theorem would yield an implicit constant expressed in terms of $[u]_{A_{1}^{\rho}}$ and $[v]_{A_{\infty}^{\rho}}$, this constant remains far from optimal. For this reason, we have chosen not to include the implicit constant in the statement of Theorem 1.2. Nevertheless, obtaining an optimal bound for the mixed weak-type inequality is also meaningful, since our weak-type estimate in Corollary 1.3 is derived from the mixed weak-type inequalities in Theorem 1.2. Moreover, the pursuit of optimal constants in weak $(1,1)$ inequalities represents a fundamental problem in harmonic analysis. Therefore, we consider the quantitative mixed weak type inequalities (or particularly, weak type inequalities) for $V_{a}(e^{-tL^{\alpha}})$ to remain a meaningful open question.
\end{rem}
This paper is organized as follows. Section 2 reviews fundamental definitions and lemmas that will be used throughout the paper. In Section 3, we establish a pointwise estimate for $V_{a}(e^{-tL^{\alpha}})$, which subsequently yields the proof of Theorem 1.1. Finally, Section 4 presents the proof of Theorem 1.2 via an extrapolation approach.\\
\indent We conclude the introduction with some conventions on the notation. We use $a\lesssim b$ to say that there exists a constant $C$, which is independent of the important parameters, such that $a\leq Cb$. Moreover, we write $a\sim b$ if $a\lesssim b$ and $b\lesssim a$. For any measurable set $E$, $|E|$ represents its Lebesgue measure. For a weight $w$, we set $w(E):=\int_{E}wdx$. Let $\chi_{E}$ stand for the characteristic function of $E$. In the paper all cubes are assumed to have edges parallel to the coordinate axes. 
\section{Preliminary}
\quad In this section, we recall some basic notations and present several lemmas that will be utilized in subsequent arguments.
\subsection{The auxiliary function and weight class}
\quad For $x\in\mathbb{R}^{n}$ and $V\in RH_{s}$ with $s>n/2$, where 
$$RH_{s}:=\left\{0\leq w\in L_{loc}:[w]_{RH_{s}}:=\sup_{Q}\frac{\langle w\rangle_{s,Q}}{\langle w\rangle_{Q}}<\infty\right\},$$
we define the auxiliary function associated with $V$ by
$$\rho(x):=\sup_{r>0}\left\{r:\frac{1}{r^{n-2}}\int_{B(x,r)}V(y)dy\leq1\right\}.$$
Shen \cite{S95} established that there exist constants $C,N>0$ such that for all $x,y\in\mathbb{R}^{n}$,
\begin{equation}C^{-1}\rho(x)\left(1+\frac{|x-y|}{\rho(x)}\right)^{-N}\leq\rho(y)\leq C\rho(x)\left(1+\frac{|x-y|}{\rho(x)}\right)^{\frac{N}{N+1}}.\tag{2.1}
\end{equation}
A function satisfies $(2.1)$ is called a critical radius function. In particular, $\rho(x)\sim\rho(y)$ whenever $|x-y|\lesssim\rho(x)$. For a cube $Q=Q(x_{0},r)$ and $\theta\geq0$, we denote $$\psi_{\theta}(Q):=\left(1+\frac{r}{\rho(x_{0})}\right)^{\theta}.$$
We shall make use of the following covering theorem associated with the critical radius function.
\begin{lem}
    \textnormal{(Dziuba\'{n}ski and Zinkiewicz \cite{DZ99})} Let $\rho$ be a critical radius function. There exists a sequence of points $x_{j}$ in $\mathbb{R}^{n}$, such that the family of balls $\{B_{j}:=B(x_{j},\rho(x_{j}))\}_{j\in\mathbb{Z}^{+}}$ satisfies the following properties:\\
    \textnormal{(i)} $\bigcup_{j\in\mathbb{Z}^{+}}B_{j}=\mathbb{R}^{n};$\\
    \textnormal{(ii)} For any $\sigma\geq1$, there exist constants $C,N>0$ such that for any $x\in\mathbb{R}^{n}$, $$\sum_{j\in\mathbb{Z}^{+}}\chi_{\sigma B_{j}}(x)\leq C\sigma^{N}.$$
\end{lem}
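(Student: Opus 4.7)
The plan is to construct the centers $\{x_j\}$ via a maximal selection and then exploit the slow variation (2.1) to verify both properties.

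First, I would fix a constant $c_0>1$ (to be chosen large enough in terms of the constant $C$ in (2.1)) and select a maximal collection of points $\{x_j\}\subset\mathbb{R}^n$ such that the shrunken balls $\tilde B_j:=B(x_j,\rho(x_j)/c_0)$ are pairwise disjoint. Such a collection is automatically countable because each $\tilde B_j$ has positive Lebesgue measure. For property (i), given any $x\in\mathbb{R}^n$, maximality forces $B(x,\rho(x)/c_0)\cap\tilde B_j\neq\emptyset$ for some $j$, hence $|x-x_j|\leq\rho(x)/c_0+\rho(x_j)/c_0$. Applying (2.1) to the pair $(x,x_j)$ in both orderings (so that both the $-N$ and the $N/(N+1)$ exponents come into play) shows that, once $c_0$ is large enough, $\rho(x)\sim\rho(x_j)$; consequently $|x-x_j|<\rho(x_j)$ and $x\in B_j$, giving $\bigcup_j B_j=\mathbb{R}^n$.

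For property (ii), I would fix $\sigma\geq1$ and $x\in\mathbb{R}^n$, and set $J(x):=\{j:x\in\sigma B_j\}$. For $j\in J(x)$ we have $|x-x_j|<\sigma\rho(x_j)$, i.e.\ $|x-x_j|/\rho(x_j)<\sigma$. Plugging this into (2.1) with base point $x_j$ yields two-sided polynomial bounds of the form $\sigma^{-N/(N+1)}\rho(x)\lesssim\rho(x_j)\lesssim\sigma^{N}\rho(x)$, uniform in $j\in J(x)$. The upper bound confines every such $x_j$ to the ball $B(x,C\sigma^{N+1}\rho(x))$, while the lower bound forces $|\tilde B_j|\gtrsim\sigma^{-nN/(N+1)}\rho(x)^n$. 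Since the $\tilde B_j$ are pairwise disjoint, summing their volumes inside $B(x,C\sigma^{N+1}\rho(x))$ gives $|J(x)|\lesssim\sigma^{n(N+1)+nN/(N+1)}$, which is the desired bound after renaming the exponent.

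The main obstacle is the asymmetry of (2.1): the upper and lower halves carry different exponents ($N/(N+1)$ versus $N$), so to establish the two-sided polynomial comparison between $\rho(x_j)$ and $\rho(x)$ on $\sigma B_j$ one must apply (2.1) in both directions and combine the results carefully; a single application is not enough. Once that comparison is in hand, the rest is a routine Vitali-packing estimate, and tracking the exponents just yields some concrete $N'=N'(N,n)$ in the conclusion.
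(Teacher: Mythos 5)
The paper cites this covering lemma from Dziuba\'{n}ski and Zinkiewicz without reproducing a proof, so there is no in-paper argument to compare your attempt against. Your construction is correct and essentially complete: maximally selecting centers $\{x_j\}$ so that the shrunken balls $\tilde{B}_j=B(x_j,\rho(x_j)/c_0)$ are pairwise disjoint, then using the lower half of $(2.1)$ (applied with base point whichever of $x,x_j$ has the smaller $\rho$-value) to show $\rho(x)\sim\rho(x_j)$ with constants depending only on $C,N$ and uniform in $c_0\geq 2$ whenever $B(x,\rho(x)/c_0)$ meets some $\tilde{B}_j$, after which choosing $c_0$ larger than that equivalence constant forces $x\in B_j$ and hence gives (i). For (ii), the two-sided estimate $\sigma^{-N/(N+1)}\rho(x)\lesssim\rho(x_j)\lesssim\sigma^{N}\rho(x)$ you extract from both halves of $(2.1)$ (with base point $x_j$) confines the disjoint $\tilde{B}_j$ with $j\in J(x)$ inside a fixed dilate $B(x,C\sigma^{N+1}\rho(x))$ while bounding their volumes from below by a multiple of $\sigma^{-nN/(N+1)}\rho(x)^n$, so the Vitali packing count gives $|J(x)|\lesssim c_0^n\,\sigma^{n(N+1)+nN/(N+1)}$, with $c_0$ fixed. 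This is the standard Whitney-type cover argument for a critical radius function and is close in spirit to the original source; the exponent you obtain is not claimed to be sharp, but the lemma only requires some finite power of $\sigma$.
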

 A weight is a non-negative locally integrable function on $\mathbb{R}^{n}$. For $\theta\geq0$ and a critical radius function $\rho$, we say that $w\in A_{p}^{\rho,\theta}$ for $1<p<\infty$ if
$$[w]_{A_{p}^{\rho,\theta}}:=\sup_{Q}\left(\frac{1}{|Q|}\int_{Q}w(x)dx\right)\left(\frac{1}{|Q|}\int_{Q}w(x)^{1-p^{\prime}}dx\right)^{p-1}\psi_{\theta}(Q)^{-1}<\infty.$$
We say that $w\in A_{1}^{\rho,\theta}$ if
$$[w]_{A_{1}^{\rho,\theta}}:=\sup_{Q}\frac{1}{|Q|}\int_{Q}w(x)dx(\underset{x\in Q}{\text{essinf}}\ w(x))^{-1}\psi_{\theta}(Q)^{-1}<\infty.$$
We also define 
$$A_{p}^{\rho}:=\bigcup_{\theta\geq0}A_{p}^{\rho,\theta},\quad A_{\infty}^{\rho}:=\bigcup_{p\geq1}A_{p}^{\rho}.$$
In particular, the classes $A_{p}^{\rho,\theta}$ are increasing in $\theta$, and they coincide with the classical Muckenhoupt $A_{p}$ classes when $\theta=0$.

Extensive research has been studied on the properties of such weights. In this work, We shall use the following property.
\begin{lem}
    \textnormal{(Wang, Zhao, Li and Liu \cite{WZLL25})} Let $1\leq p<\infty$. If $w\in A_{p}^{\rho}$, then there exist constants $0<\delta<1$, $\eta>0$ and $C>0$ such that for any cube $Q$ and measurable set $E\subseteq Q$,
    $$\frac{w(E)}{w(Q)}\leq C\psi_{\eta}(Q)\left(\frac{|E|}{|Q|}\right)^{\delta}.$$    
\end{lem}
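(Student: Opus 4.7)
The plan is to deduce the estimate from a reverse Hölder inequality for $w$. Specifically, I will first establish that there exist $q>1$, $\eta>0$ and $C>0$ such that
$$\left(\frac{1}{|Q|}\int_Q w(x)^q\,dx\right)^{\frac{1}{q}} \le C\,\psi_\eta(Q)\,\frac{1}{|Q|}\int_Q w(x)\,dx$$
for every cube $Q$. Once this is in hand, a single application of Hölder's inequality gives
$$w(E) = \int_E w(x)\,dx \le \left(\int_Q w^q\right)^{\frac{1}{q}}|E|^{\frac{1}{q'}} \le C\,\psi_\eta(Q)\,w(Q)\left(\frac{|E|}{|Q|}\right)^{\frac{1}{q'}},$$
and the lemma follows with $\delta = 1/q' \in (0,1)$.

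For the reverse Hölder step I would split into two regimes according to the size of $Q$ relative to $\rho(x_Q)$. If $Q$ is \emph{subcritical}, meaning its sidelength satisfies $r_Q \le \rho(x_Q)$, then by (2.1) the auxiliary function $\rho$ is comparable to $\rho(x_Q)$ throughout $Q$, so $\psi_\theta(Q')$ is bounded by a uniform constant for every sub-cube $Q' \subseteq Q$. Consequently the $A_p^{\rho,\theta}$ condition, when restricted to sub-cubes of $Q$, is equivalent to the classical Muckenhoupt $A_p$ condition with a constant depending only on $[w]_{A_p^{\rho,\theta}}$. A standard Calderón--Zygmund stopping-time argument (Gehring's lemma) then produces an exponent $q>1$ and a uniform constant for which the reverse Hölder inequality holds on $Q$ without any $\psi$-factor. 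If instead $Q$ is \emph{supercritical} ($r_Q > \rho(x_Q)$), I cover $Q$ by the critical balls $\{B_j\}$ provided by Lemma 2.1, apply the subcritical reverse Hölder inequality to each $B_j$ meeting $Q$, and sum the resulting local bounds; the bounded overlap property in Lemma 2.1(ii) controls the combinatorial factor, while (2.1) relates $\rho(x_j)$ to $\rho(x_Q)$ and is what produces the overall factor $\psi_\eta(Q)$.

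The main obstacle will be the bookkeeping in the supercritical case: one must show that summing the local reverse Hölder estimates on the critical balls is dominated by the \emph{global} averages of $w$ and $w^q$ over $Q$, up to an acceptable loss $\psi_\eta(Q)$. This requires tracking the polynomial exponents $N$ coming from both (2.1) and Lemma 2.1(ii), and choosing $\eta$ large enough to absorb the powers of $(1+r_Q/\rho(x_Q))$ that arise when replacing $\rho(x_j)$ by $\rho(x_Q)$ and when summing over the balls that overlap $Q$. The exponent $q>1$ produced by the stopping-time argument depends only on $[w]_{A_p^{\rho,\theta}}$, so it is uniform across cubes; what genuinely demands care is the uniformity of the $\psi_\eta(Q)$ loss across all scales of $Q$.
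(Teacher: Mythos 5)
The paper does not give its own proof of this lemma: it cites \cite{WZLL25} and only remarks that the ball version there transfers verbatim to cubes, so there is no in-paper argument to compare against. That said, your route is the standard one for statements of this type in the critical-radius setting, and it is correct in outline. The weighted reverse H\"older inequality $\langle w^q\rangle_Q^{1/q}\lesssim \psi_\eta(Q)\langle w\rangle_Q$ combined with a single H\"older step does give the lemma with $\delta=1/q'$, and the displayed computation is right. The subcritical/supercritical split is also the correct mechanism: for $r_Q\le\rho(x_Q)$ the factor $\psi_\theta(Q')$ is uniformly bounded over all sub-cubes $Q'\subseteq Q$, so the classical Calder\'on--Zygmund stopping-time passage from $A_p$ to reverse H\"older applies with a constant depending only on $[w]_{A_p^{\rho,\theta}}$ and $\theta$ (a minor naming point: that step is not really Gehring's lemma, which concerns self-improvement of a reverse H\"older inequality already in hand, but you also name the correct stopping-time mechanism).

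In the supercritical case one substantive ingredient is left implicit. Covering $Q$ by the critical balls meeting it and applying the local reverse H\"older on each $B_j$ gives $\int_Q w^q\lesssim\sum_j|B_j|^{1-q}w(B_j)^q$; the factor $|B_j|^{1-q}$ is indeed handled by the lower bound $|B_j|\gtrsim\psi_M(Q)^{-1}|Q|$ coming from (2.1), and since $q>1$ one has $\sum_j w(B_j)^q\le\bigl(\sum_j w(B_j)\bigr)^q\lesssim w(cQ)^q$ by bounded overlap, for a fixed dilate $cQ\supset\bigcup_j B_j$. But to close the estimate you then need a $\psi$-weighted doubling inequality $w(cQ)\lesssim\psi_\theta(Q)\,w(Q)$. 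This does \emph{not} follow from (2.1) or Lemma 2.1; it is a separate consequence of the $A_p^{\rho,\theta}$ condition, obtained by pairing the defining inequality on $cQ$ with the Jensen-type bound $\langle w\rangle_Q^{-1}\le\langle w^{1-p'}\rangle_Q^{p-1}\lesssim\langle w^{1-p'}\rangle_{cQ}^{p-1}$. Your description of the bookkeeping focuses on the exponents $N$ from (2.1) and Lemma 2.1(ii), which will not produce this doubling step, so it should be stated explicitly; with it in place the argument closes.
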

\begin{rem}
    In the cited reference, the result is stated for balls. However, we apply the analogous result for cubes here. The proof remains entirely unchanged.
\end{rem}
\subsection{Function spaces and Operators}
\quad We begin by the study of the Orlicz space. Recall that $\Phi:[0,\infty)\rightarrow[0,\infty)$ is called a Young function if it is continuous, increasing, convex and satisfies $\Phi(0)=0$ and $\lim_{t\rightarrow\infty}\Phi(t)/t=\infty$. The corresponding complementary function of $\Phi$, $\bar{\Phi}:[0,\infty)\rightarrow[0,\infty)$ is given by
$$\bar{\Phi}(t)=\sup_{s>0}\{st-\Phi(s)\}.$$
Let $\Phi$ be a Young function. The localized Orlicz norm $\|f\|_{\Phi,Q}$ is defined by 
$$\|f\|_{\Phi,Q}:=\textnormal{inf}\left\{\lambda>0:\frac{1}{|Q|}\int_{Q}\Phi\left(\frac{|f(x)|}{\lambda}\right)dx\leq1\right\}.$$
Especially when $\Phi(t)=t^{p}$, we simply write $\|\cdot\|_{p,Q}$. Given $1<p<\infty$, we say that a Young function $\Phi$ belongs to $B_{p}$ if there exists a constant $c>0$ such that
$$[\Phi]_{B_{p}}:=\int_{c}^{\infty}\frac{\Phi(t)}{t^{p+1}}dt<\infty.$$
 
Then we define the Hardy-Littlewood maximal operator in the setting of critical radius functions. For $\theta\geq0$, define
$$M^{\rho,\theta}f(x)=\sup_{Q\ni x}\frac{1}{|Q|\psi_{\theta}(Q)}\int_{Q}|f(y)|dy.$$
It satisfies the following weighted inequality.
\begin{lem}
    \textnormal{(Bui, Bui and Duong \cite{BBD22}; Bongioanni, Harboure and Quijano \cite{BHQ20})} Let $\rho$ be a critical radius function, $\theta\geq0$ and $1<p<\infty$. Then we have\\
    \textnormal{(i)} $\|M^{\rho,\theta}f\|_{L^{p}(w)}\lesssim[w]_{A_{p}^{\rho,\theta(p-1)}}^{1/(p-1)}\|f\|_{L^{p}(w)}$.\\
    \textnormal{(ii)} Let $\Psi$ be a Young function such that $\bar{\Psi}\in B_{p}$, and a pair $(u,v)$ of weights satisfies
    $$[u,v]_{\Psi,p,\rho,\theta}:=\sup_{Q}\|v^{\frac{1}{p}}\|_{p,Q}\|u^{-\frac{1}{p}}\|_{\Psi,Q}\psi_{\theta}(Q)^{-1}<\infty.$$
    Then here exists $\sigma_{0}$ (depend on $\theta$) such that for all $\sigma\geq\sigma_{0}$,
    $$\|M^{\rho,\sigma}f\|_{L^{p}(v)}\lesssim[u,v]_{\Psi,p,\rho,\theta}[\bar{\Psi}]_{B_{p}}^{\frac{1}{p}}\|f\|_{L^{p}(u)}.$$
\end{lem}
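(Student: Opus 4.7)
The plan is to prove both parts by localizing $M^{\rho,\theta}$ via the covering from Lemma 2.1 and then invoking the classical quantitative weighted estimates (Buckley for (i), P\'erez for (ii)) on each localized piece. The natural decomposition is
$$M^{\rho,\theta}f(x)\leq M^{\rho,\theta}_{\mathrm{loc}}f(x)+M^{\rho,\theta}_{\mathrm{glob}}f(x),$$
where $M^{\rho,\theta}_{\mathrm{loc}}$ restricts the supremum to cubes $Q\ni x$ with $r_{Q}\leq\rho(x_{Q})$ and $M^{\rho,\theta}_{\mathrm{glob}}$ to cubes with $r_{Q}>\rho(x_{Q})$. On local cubes one has $\psi_{\theta}(Q)\sim 1$, so the critical-radius condition degenerates to the classical Muckenhoupt condition; on global cubes the decay of $\psi_{\theta}(Q)^{-1}$ must absorb the enlargement of the classical $A_{p}$ constant.

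For part (i), each local cube containing $x$ is contained in a fixed dilate $CB_{j}$ of some ball $B_{j}$ from Lemma 2.1 that contains $x$. Since $\psi_{\theta}(Q)\sim 1$ there, a direct verification using (2.1) shows $w\big|_{CB_{j}}\in A_{p}$ with classical constant $\lesssim[w]_{A_{p}^{\rho,\theta(p-1)}}$. Applying Buckley's sharp weighted inequality on each $CB_{j}$ and then summing using the bounded-overlap property of Lemma 2.1(ii) yields
$$\|M^{\rho,\theta}_{\mathrm{loc}}f\|_{L^{p}(w)}^{p}\lesssim [w]_{A_{p}^{\rho,\theta(p-1)}}^{p/(p-1)}\sum_{j}\|f\chi_{CB_{j}}\|_{L^{p}(w)}^{p}\lesssim [w]_{A_{p}^{\rho,\theta(p-1)}}^{p/(p-1)}\|f\|_{L^{p}(w)}^{p}.$$
For the global piece I would dyadically split over scales $r_{Q}\sim 2^{k}\rho(x_{Q})$: the factor $\psi_{\theta}(Q)^{-1}\sim 2^{-k\theta}$ gives a geometric gain, while the classical $A_{p}$ constant of $w$ on cubes at scale $2^{k}\rho(x_{Q})$ grows at most like $2^{k\theta(p-1)}[w]_{A_{p}^{\rho,\theta(p-1)}}$; the precise match of the exponent $\theta(p-1)$ is what makes the two contributions balance, and the sum closes with the same power of $[w]_{A_{p}^{\rho,\theta(p-1)}}$.

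For part (ii), the same localization applies. On $M^{\rho,\sigma}_{\mathrm{loc}}$, generalized H\"older in Orlicz spaces gives
$$\frac{1}{|Q|}\int_{Q}|f|\,dx\leq 2\|u^{-1/p}\|_{\Psi,Q}\|f u^{1/p}\|_{\bar\Psi,Q},$$
so the testing hypothesis $[u,v]_{\Psi,p,\rho,\theta}<\infty$ reduces the estimate to the $L^{p}$-boundedness of the Orlicz maximal operator $M_{\bar\Psi}$. This is exactly P\'erez's theorem under $\bar\Psi\in B_{p}$, delivering the factor $[\bar\Psi]_{B_{p}}^{1/p}$. For the global piece $\sigma_{0}$ has to be chosen large enough (depending on $\theta$) so that $\psi_{\sigma}(Q)^{-1}$ decays faster than the worst-case growth of the two-weight testing constant on cubes with $r_{Q}\gg\rho(x_{Q})$; a geometric summation then finishes the estimate.

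The main obstacle is precisely the global piece, where the weight class $A_{p}^{\rho,\theta(p-1)}$ is strictly weaker than classical $A_{p}$ and where the two-weight constant need not be uniform in $r_{Q}/\rho(x_{Q})$. The whole argument works only because the exponents $\theta(p-1)$ in part (i) and $\sigma_{0}$ in part (ii) are tuned so that the geometric decay of $\psi^{-1}$ exactly offsets the geometric enlargement of the classical Muckenhoupt (resp.\ two-weight) constant across dyadic scales of $r_{Q}/\rho(x_{Q})$. Up to these bookkeeping details, the argument is essentially the one contained in the cited works of Bui--Bui--Duong and Bongioanni--Harboure--Quijano.
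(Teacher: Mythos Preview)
The paper does not prove this lemma; it is quoted without proof from \cite{BBD22} and \cite{BHQ20}, so there is no argument in the paper itself to compare your sketch against.

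Your outline is in the spirit of those references for the local piece and for part (ii), but the global piece of part (i) has a genuine gap as you have written it. If one applies Buckley scale by scale as you propose, the decay $\psi_\theta(Q)^{-1}\sim 2^{-k\theta}$ and the growth $\bigl(2^{k\theta(p-1)}[w]\bigr)^{1/(p-1)}=2^{k\theta}[w]^{1/(p-1)}$ cancel \emph{exactly}, so the sum over $k\ge 0$ diverges rather than ``closes''; an exact balance is precisely what prevents a geometric series from summing. The arguments in the cited works do not run Buckley on the global piece. Instead, for $x$ in a critical ball $B_j$ one bounds $M^{\rho,\theta}_{\mathrm{glob}}f(x)\lesssim\sum_{k\ge 0}2^{-k\theta}\langle|f|\rangle_{C2^{k}B_j}$ pointwise and then integrates directly: H\"older gives $\langle|f|\rangle_Q^{p}\le \langle|f|^{p}w\rangle_Q\,\langle w^{1-p'}\rangle_Q^{p-1}$, and the $A_p^{\rho,\theta(p-1)}$ condition bounds the second factor by $[w]\,\psi_{\theta(p-1)}(Q)/\langle w\rangle_Q$. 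Together with $w(B_j)\le w(C2^{k}B_j)$ this costs only a single power of $[w]$ (hence $[w]^{1/p}$ at the level of norms, strictly better than $[w]^{1/(p-1)}$), leaving a net geometric exponent of order $-\theta$ in $k$ after an $\epsilon$-loss from bringing the $p$th power inside the $k$-sum. The series then converges, and the sharp exponent $[w]^{1/(p-1)}$ in the statement is supplied entirely by the local piece via Buckley, exactly as you outlined there.
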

Next, we define the sparse operator associated with the critical radius function. We begin with the notion of the shifted dyadic cubes and the sparse collection. For $t\in\left\{0,1,2\right\}^{n}$, we denote the shifted dyadic cubes in $\mathbb{R}^{n}$ by 
$$\mathcal{D}^{t}:=\left\{2^{-k}\left([0,1)^{n}+m+(-1)^{k}\frac{t}{3}\right);\ k\in\mathbb{Z},m\in\mathbb{Z}^{n}\right\},\ \text{and}\ \mathcal{D}:=\bigcup_{t}\mathcal{D}^{t}.$$
For $Q_{0}\in\mathcal{D}^{t}$, we denote by $\mathcal{D}^{t}(Q_{0})$ the collection of all dyadic cubes $Q\in\mathcal{D}^{t}$ that satisfy $Q\subseteq Q_{0}$.
\begin{defi}
For $t\in\{0,1,2\}^{n}$, a collection of cubes $\mathcal{S}\subseteq\mathcal{D}^{t}$ is said to be a sparse collection, if there is a pairwise disjoint collection $\{E_Q\}_{Q\in\mathcal{S}}$, so that $E_Q\subseteq Q$ and $|E_Q|\geq|Q|/2$.
\end{defi}
Let $\rho$ be a critical radius function, $\sigma\geq0$ and $\mathcal{S}$ be a sparse family. Define
$$\mathcal{A}_{\mathcal{S}}^{\rho,\sigma}f(x):=\sum_{Q\in\mathcal{S}}\langle f\rangle_{3Q}\psi_{\sigma}(Q)^{-1}\chi_{Q}(x).$$
We briefly denote the sparse operator by $\mathcal{A}_{\mathcal{S}}$ in the special case $\sigma=0$. We have the following weighted estimate for such sparse operator.
\begin{lem}
    \textnormal{(Bui, Bui and Duong \cite{BBD21}; Wen and Wu \cite{WW25})} Let $\rho$ be a critical radius function, $\theta\geq0$ and $1<p<\infty$. Then we have\\
    \textnormal{(i)} For $\sigma\geq\theta\ \textnormal{max}\{1,1/(p-1)\}$,
$$\|\mathcal{A}_{\mathcal{S}}^{\rho,\sigma}f\|_{L^{p}(w)}\lesssim[w]_{A_{p}^{\rho,\theta}}^{\textnormal{max}\{1,\frac{1}{p-1}\}}\|f\|_{L^{p}(w)}.$$
\textnormal{(ii)} Let $\Psi$ be a Young function such that $\bar{\Psi}\in B_{p}$, then
$$\|\mathcal{A}_{\mathcal{S}}^{\rho,\theta}f\|_{L^{p}(v)}\lesssim[u,v]_{\Psi,p,\rho,\theta}[\bar{\Psi}]_{B_{p}}^{\frac{1}{p}}\|f\|_{L^{p}(u)}.$$
\end{lem}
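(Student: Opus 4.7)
The plan is to prove both parts by dualization combined with a Moen/Lerner-style principal-cube stopping-time argument, with careful bookkeeping of the $\psi_\theta$ and $\psi_\sigma$ factors that arise from the critical-radius setting. After fixing a shift $t\in\{0,1,2\}^n$ (which costs only an absolute constant), part (i) reduces by duality to showing that
\[
\Lambda(f,g):=\sum_{Q\in\mathcal{S}}\langle f\rangle_{3Q}\langle g\rangle_{Q}\psi_{\sigma}(Q)^{-1}|Q|\lesssim [w]_{A_{p}^{\rho,\theta}}^{\max\{1,1/(p-1)\}}\|f\|_{L^{p}(w)}\|g\|_{L^{p'}(w^{1-p'})}
\]
for nonnegative $f,g$. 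Writing $\sigma_{w}:=w^{1-p'}$ and expanding the unweighted averages via $\langle f\rangle_{3Q}=\langle f/\sigma_{w}\rangle_{3Q}^{\sigma_{w}}\langle \sigma_{w}\rangle_{3Q}$ and $\langle g\rangle_{Q}=\langle g/w\rangle_{Q}^{w}\langle w\rangle_{Q}$, the $A_{p}^{\rho,\theta}$ hypothesis together with $\psi_{\theta}(3Q)\sim\psi_{\theta}(Q)$ (which is immediate from (2.1)) gives $\langle w\rangle_{Q}\langle\sigma_{w}\rangle_{3Q}^{p-1}\lesssim [w]_{A_{p}^{\rho,\theta}}\psi_{\theta}(Q)$.

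Next I would run two stopping-time constructions in parallel, producing principal cubes with respect to both $w$ and $\sigma_{w}$: a cube $Q$ is principal for $g/w$ (resp.\ $f/\sigma_{w}$) when its weighted average first doubles past that of its preceding principal ancestor. Grouping $\mathcal{S}$ by the pair of principal cubes each $Q$ descends from, together with Hölder's inequality and the sparsity $|E_{Q}|\geq |Q|/2$, yields
\[
\Lambda(f,g)\lesssim [w]_{A_{p}^{\rho,\theta}}^{\max\{1,1/(p-1)\}}\,\sup_{Q}\bigl(\psi_{\theta}(Q)^{\max\{1,1/(p-1)\}}\psi_{\sigma}(Q)^{-1}\bigr)\,\|f\|_{L^{p}(w)}\|g\|_{L^{p'}(\sigma_{w})},
\]
and $\sigma\geq\theta\max\{1,1/(p-1)\}$ forces the supremum to be $\lesssim 1$, giving (i). For (ii), the identical scheme applies with one modification: the step that combines Hölder's inequality with the $A_{p}^{\rho,\theta}$ condition is replaced by the generalized Hölder inequality for Orlicz spaces $\langle fg\rangle_{Q}\lesssim\|f\|_{\bar{\Psi},Q}\|g\|_{\Psi,Q}$ together with the $B_{p}$ bound $\|h\|_{\bar{\Psi},Q}\lesssim[\bar{\Psi}]_{B_{p}}^{1/p}\|h\|_{p',Q}$. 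This substitutes the testing constant $[u,v]_{\Psi,p,\rho,\theta}$ for $[w]_{A_{p}^{\rho,\theta}}$ in the final bound.

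The \textbf{main obstacle} is the precise calibration of the $\psi$-factors and the sharp exponent of $[w]_{A_{p}^{\rho,\theta}}$. Each pair of telescoping sums produced by the principal cubes generates a $\psi_{\theta}$ factor from the $A_{p}^{\rho,\theta}$ condition, and the exponent $\max\{1,1/(p-1)\}$ emerges only after splitting the bilinear sum according to whether the $w$-side or $\sigma_{w}$-side stopping time is dominant and applying Hölder with the exponent matched to the dominant side; doing this naively yields a strictly larger power of $[w]_{A_{p}^{\rho,\theta}}$. Ensuring geometric convergence of the resulting double principal-cube series uses the doubling properties of $\psi_{\theta}$ inherited from (2.1), and checking that the threshold $\sigma\geq\theta\max\{1,1/(p-1)\}$ is sharp requires a delicate simultaneous tracking of both stopping-time trees.
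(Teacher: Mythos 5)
The paper does not actually prove Lemma 2.5; it cites part (i) from Bui--Bui--Duong \cite{BBD21} and states part (ii) as a modification of Wen--Wu \cite{WW25} (see the Remark following the lemma, which records the original Wen--Wu statement with two Young functions $\Phi,\Psi$ and asserts that (ii) ``can be proved using exactly the same method''). So there is no internal proof to compare against. Your duality-plus-principal-cubes sketch is a legitimate route for part (i), and the calibration $\sigma\geq\theta\max\{1,1/(p-1)\}$ is indeed consistent with the self-duality $[w]_{A_p^{\rho,\theta}}^{1/(p-1)}=[w^{1-p'}]_{A_{p'}^{\rho,\theta/(p-1)}}$, so I find that part plausible even if the exponent bookkeeping is left as a claim.

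For part (ii), however, the quoted ``$B_p$ bound'' $\|h\|_{\bar{\Psi},Q}\lesssim[\bar{\Psi}]_{B_p}^{1/p}\|h\|_{p',Q}$ is not a correct consequence of $\bar{\Psi}\in B_p$, and the step does not work as written. What $\bar{\Psi}\in B_p$ actually gives, by monotonicity of $\bar{\Psi}$ together with $\int_c^{\infty}\bar{\Psi}(t)t^{-p-1}\,dt<\infty$, is the pointwise bound $\bar{\Psi}(t)\lesssim[\bar{\Psi}]_{B_p}t^{p}$ for $t\geq c$, hence a local comparison with the exponent $p$, namely $\|h\|_{\bar{\Psi},Q}\lesssim[\bar{\Psi}]_{B_p}^{1/p}\|h\|_{p,Q}$; the exponent $p'$ in your proposal would be false for $p\geq 2$ (where $\|h\|_{p',Q}\leq\|h\|_{p,Q}$ and the two are not comparable). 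More importantly, even with the corrected exponent, the $[\bar{\Psi}]_{B_p}^{1/p}$ factor in P\'erez-type two-weight estimates (and in the Wen--Wu argument the paper points to) enters via the $L^p$-boundedness of the Orlicz maximal function $M_{\bar{\Psi}}$, not via a local Orlicz-to-Lebesgue norm comparison; the sparsity $|E_Q|\geq|Q|/2$ is used to convert the sum over $Q\in\mathcal{S}$ into an integral of $M_{\bar{\Psi}}(fu^{1/p})^p$, after which Pérez's theorem is applied. Your sketch never invokes the maximal function on the $f$-side, and it also leaves unaddressed how the $g$-side sum $\sum_Q\|gv^{-1/p}\|_{p',Q}^{p'}|E_Q|$ is to be controlled without an $A_\infty$-type Carleson hypothesis on $v$ (which is not assumed); the principal-cube machinery you mention would need to be spelled out precisely there, since a naive $M_{p'}$-bound fails on $L^{p'}$. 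As written, part (ii) of the proposal has a genuine gap.
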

\begin{rem}
Here $\textnormal{(ii)}$ is a modified version of the result by Wen and Wu \textnormal{\cite{WW25}}. In fact, the authors established that
$$\|\mathcal{A}_{\mathcal{S}}^{\rho,\theta}f\|_{L^{p}(v)}\lesssim[u,v]_{\Phi,\Psi,p,\rho,\theta}[\bar{\Phi}]_{B_{p^{\prime}}}^{\frac{1}{p^{\prime}}}[\bar{\Psi}]_{B_{p}}^{\frac{1}{p}}\|f\|_{L^{p}(u)},$$
with $\Phi,\Psi$ are Young functions that $\bar{\Phi}\in B_{p^{\prime}}$, $\bar{\Psi}\in B_{p}$ and a pair of weights $(u,v)$ satisfies
$$[u,v]_{\Phi,\Psi,p,\rho,\theta}:=\sup_{Q}\|v^{\frac{1}{p}}\|_{\Phi,Q}\|u^{-\frac{1}{p}}\|_{\Psi,Q}\psi_{\theta}(Q)^{-1}<\infty.$$
\textnormal{(ii)} can be proved using exactly the same method.
\end{rem}
We end this subsection by recalling a principle of sparse domination.
\begin{prop}
\textnormal{(Lerner \cite{L13})} Let $T$ be a sublinear operator which is of weak type \textnormal{($r,r$)} with $1\leq r<\infty$. Suppose that $M_{T}$ is of weak type \textnormal{($r,r$)}. Then there is a constant $C>0$ such that for all $f\in L^{r}$ supported on $3Q_{0}$ for some $Q_{0}\in\mathcal{D}$, there is a sparse family $\mathcal{S}\subseteq\mathcal{D}(Q_{0})$ (depending on $f$) such that
$$|Tf|\chi_{Q_{0}}\leq C\sum_{P\in\mathcal{S}}\langle|f|^{r}\rangle_{3P}^{\frac{1}{r}}\chi_{P}.$$
Here $\mathcal{D}(Q_{0})$ is the set of all dyadic cubes with respect to $Q_{0}$, and 
$$M_{T}f(x):=\sup_{Q\ni x}\|T(f\chi_{\mathbb{R}^{n}\backslash3Q})\|_{L^{\infty}(Q)},$$
where the supremum is taken over all cubes $Q\subseteq\mathbb{R}^{n}$ containing $x$.
\end{prop}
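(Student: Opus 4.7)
The plan is to follow Lerner's iterative stopping-time construction, building $\mathcal{S}$ generation by generation so that each new generation covers at most half the measure of the previous one. Fix $Q_0 \in \mathcal{D}$ and $f \in L^r$ supported in $3Q_0$, and let $A$ be a constant (to be chosen, depending on the weak-type norms of $T$ and $M_T$). Define the bad set
$$E := \{x \in Q_0 : |Tf(x)| > A\langle|f|^r\rangle_{3Q_0}^{1/r}\} \cup \{x \in Q_0 : M_T f(x) > A\langle|f|^r\rangle_{3Q_0}^{1/r}\}.$$
The weak-type $(r,r)$ hypotheses on both $T$ and $M_T$, combined with $\|f\|_r^r = |3Q_0|\langle|f|^r\rangle_{3Q_0}$, give $|E| \lesssim A^{-r}|Q_0|$, so choosing $A$ sufficiently large arranges $|E| \leq |Q_0|/2^{n+2}$. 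A Calder\'on--Zygmund decomposition inside $\mathcal{D}(Q_0)$ against $\chi_E$ at level $1/2^{n+1}$ then yields a pairwise disjoint family $\{P_j\}\subseteq\mathcal{D}(Q_0)$ of maximal dyadic subcubes with $|P_j \cap E| > |P_j|/2^{n+1}$, $E \subseteq \bigcup_j P_j$ modulo null sets, and $\sum_j |P_j| \leq |Q_0|/2$.

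Next I would prove the one-step pointwise bound
$$|Tf(x)|\chi_{Q_0}(x) \leq A\langle|f|^r\rangle_{3Q_0}^{1/r}\chi_{Q_0}(x) + \sum_j |T(f\chi_{3P_j})(x)|\chi_{P_j}(x).$$
On $Q_0 \setminus \bigcup_j P_j$, which lies outside $E$ up to null sets, the first term dominates $|Tf|$ directly. On each $P_j$, sublinearity gives $|Tf| \leq |T(f\chi_{3P_j})| + |T(f\chi_{\mathbb{R}^n\setminus 3P_j})|$, and the away piece is controlled in $L^\infty(P_j)$ by a constant multiple of $A\langle|f|^r\rangle_{3Q_0}^{1/r}$ via the maximality of $P_j$ and the definition of $M_T$ (discussed below). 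Iterating this construction on each $P_j$ with input $f\chi_{3P_j}$ produces successive generations of cubes. Let $\mathcal{S}$ be the union of $\{Q_0\}$ with all cubes across all generations, and set $E_Q := Q \setminus \bigcup\{Q' \in \mathcal{S} : Q' \text{ is a direct child of } Q\}$; the stopping estimate $\sum |P_j| \leq |Q|/2$ at every stage yields pairwise disjoint $E_Q \subseteq Q$ with $|E_Q| \geq |Q|/2$, verifying sparseness. Since the remainder after $N$ iterations is supported on a set of measure at most $|Q_0|/2^N$, it vanishes a.e. as $N \to \infty$; iterating the one-step bound and using $\langle|f\chi_{3Q}|^r\rangle_{3P} \leq \langle|f|^r\rangle_{3P}$ at each stage produces the desired sparse domination with $C$ a multiple of $A$.

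The main technical obstacle is the $L^\infty(P_j)$ bound on $T(f\chi_{\mathbb{R}^n\setminus 3P_j})$. One cannot apply the definition of $M_T$ at a point $y \in P_j$ with test cube $P_j$, because essentially all of $P_j$ sits inside the bad set $E$ and so $M_T f(y)$ is a priori uncontrolled there. The remedy is to pass to the dyadic parent $\hat{P}_j$: by maximality in the Calder\'on--Zygmund selection $|\hat{P}_j \cap E| \leq |\hat{P}_j|/2^{n+1}$, hence $\hat{P}_j$ contains some $y \notin E$ with $M_T f(y) \leq A\langle|f|^r\rangle_{3Q_0}^{1/r}$, and applying the definition of $M_T$ at $y$ with test cube $\hat{P}_j$ controls $\|T(f\chi_{\mathbb{R}^n\setminus 3\hat{P}_j})\|_{L^\infty(\hat{P}_j)}$. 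The corridor piece $T(f\chi_{3\hat{P}_j \setminus 3P_j})$ must then be absorbed into the main term with a controlled loss. The careful coordination of the constant $A$, the stopping threshold $1/2^{n+1}$, and this corridor absorption is the bookkeeping heart of the argument.
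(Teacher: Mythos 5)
Your overall architecture is the right one—Lerner's iterative stopping-time construction with the exceptional set $E$, the local Calder\'on--Zygmund selection at level $1/2^{n+1}$, and the one-step decomposition $|Tf|\chi_{Q_0}\le A\langle|f|^r\rangle_{3Q_0}^{1/r}\chi_{Q_0}+\sum_j|T(f\chi_{3P_j})|\chi_{P_j}$—and the sparseness and iteration bookkeeping are correctly set up. The paper does not supply a proof (it cites Lerner directly), so the comparison must be against the standard argument, and there you introduce an unnecessary and incompletely justified detour.

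The ``main technical obstacle'' you identify is not actually present. You claim one cannot use the test cube $P_j$ itself because ``essentially all of $P_j$ sits inside the bad set $E$.'' But with your selection threshold this is false: $P_j$ is a maximal dyadic cube with $|P_j\cap E|>|P_j|/2^{n+1}$, so its dyadic parent $\widehat{P}_j$ fails the criterion, $|\widehat{P}_j\cap E|\le|\widehat{P}_j|/2^{n+1}$, and therefore
$$|P_j\cap E|\le|\widehat{P}_j\cap E|\le\frac{|\widehat{P}_j|}{2^{n+1}}=\frac{2^n|P_j|}{2^{n+1}}=\frac{|P_j|}{2}.$$
Thus $|P_j\setminus E|\ge|P_j|/2>0$, so there exists $y\in P_j\setminus E$, and applying the definition of $M_T$ at $y$ with the test cube $Q=P_j$ gives directly
$$\|T(f\chi_{\mathbb{R}^n\setminus 3P_j})\|_{L^\infty(P_j)}\le M_Tf(y)\le A\langle|f|^r\rangle_{3Q_0}^{1/r}.$$
No passage to $\widehat{P}_j$ and no ``corridor'' term $T(f\chi_{3\widehat{P}_j\setminus 3P_j})$ is needed. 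Moreover, the corridor route as you sketch it is a genuine gap: you would need a pointwise $L^\infty(P_j)$ bound on $T(f\chi_{3\widehat{P}_j\setminus 3P_j})$, which the weak-type hypotheses do not supply, and ``absorbed into the main term with a controlled loss'' is not an argument. So the fix is to delete that paragraph and use the direct bound: the selection threshold $1/2^{n+1}$ was chosen precisely so that maximality forces $|P_j\cap E|\le|P_j|/2$, which simultaneously gives the measure decay $\sum_j|P_j|\le|Q_0|/2$ and the existence of a good point in $P_j$.
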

\subsection{Fractional Schr\"{o}dinger semigroups}
\quad According to the basic theory of semigroups, for $0\leq V\in RH_{s}$ with $s>q/2$, the Schr\"{o}dinger operator $L=-\Delta+V$ generates the Schr\"{o}dinger semigroup $\{e^{-tL}\}_{t>0}$, that is,
$$L(f):=\lim_{t\rightarrow0}\frac{f-e^{-tL}f}{t},$$
where the limit is taken in the sense of $L^{2}(\mathbb{R}^{n})$. It is known that the Schr\"{o}dinger semigroup $\{e^{-tL}\}_{t>0}$ admits an integral kernel $K_{t}^{L}(\cdot,\cdot)$ via the Freyman-Kac formula. Moreover, as established by Simon \cite{S82}, the kernel $K_{t}^{L}$ is a positive and symmetric function on $\mathbb{R}^{n}\times\mathbb{R}^{n}$. We mainly focus on the fractional Schr\"{o}dinger semigroup $\{e^{-tL^{\alpha}}\}$ with $0<\alpha<1$. It should be noted that when $V=0$, the fractional Schr\"{o}dinger semigroup $\{e^{-t(-\Delta)^{\alpha}}\}_{t>0}$ generated by $(-\Delta)^{\alpha}$ in $L^{2}$ with $0<\alpha<1$, can be equivalently defined via the Fourier transform as  
$$e^{-t(-\Delta)^{\alpha}}f:=\mathcal{F}^{-1}[e^{-t|\xi|^{2\alpha}}\hat{f}(\xi)].$$
This plays an important role in many fields of mathematics, such as harmonic analysis and PDEs. According to Grigor'yan \cite{G02}, we use the subordinative formula to express the integral kernel $K_{\alpha,t}^{L}(\cdot,\cdot)$ of $e^{-tL^{\alpha}}$ as
\begin{equation}
    K_{\alpha,t}^{L}(x,y)=\int_{0}^{\infty}\eta_{t}^{\alpha}(s)K_{s}^{L}(x,y)ds,\tag{2.2}
\end{equation}
where $\eta^{\alpha}_{t}(\cdot)$ is a continuous function on $(0,\infty)$. Building on the work of Li, Wang, Qian and Zhang \cite{LWQZ22}, we immediately obtain the following pointwise estimates.
\begin{lem}
    \textnormal{(Li, Wang, Qian and Zhang \cite{LWQZ22})} Let $0\leq V\in RH_{s}$ with $s>n/2$. Suppose $K_{\alpha,t}^{L}$ to be the integral kernel of $e^{-tL^{\alpha}}$ with $L=-\Delta+V$ and $0<\alpha<1$. Then we have\\
    \textnormal{(i)} For $N>0$, there exist constant $C_{N}>0$ such that for any $x,y\in\mathbb{R}^{n}$,
    $$|K_{\alpha,t}^{L}(x,y)|\leq C_{N}\ \textnormal{min}\left\{\frac{t^{1+\frac{N}{2\alpha}}}{|x-y|^{n+2\alpha+N}},t^{-\frac{n}{2\alpha}}\right\}\left(1+\frac{t^{\frac{1}{2\alpha}}}{\rho(x)}+\frac{t^{\frac{1}{2\alpha}}}{\rho(y)}\right)^{-N}.$$
    Further more,
    $$|K_{\alpha,t}^{L}(x,y)|\leq\frac{C_{N}t}{\left(t^{\frac{1}{2\alpha}}+|x-y|\right)^{n+2\alpha}}\left(1+\frac{t^{\frac{1}{2\alpha}}}{\rho(x)}+\frac{t^{\frac{1}{2\alpha}}}{\rho(y)}\right)^{-N}.$$
    \textnormal{(ii)} For $N>0$ and $m\in\mathbb{Z}^{+}$, there exist constant $C_{N,m}$ such that for any $x,y\in\mathbb{R}^{n}$,
    $$|\partial^{m}_{t}K_{\alpha,t}^{L}(x,y)|\leq C_{N,m}\ \textnormal{min}\left\{\frac{t^{1+\frac{N}{2\alpha}-m}}{|x-y|^{n+2\alpha+N}},t^{m-\frac{n}{2\alpha}}\right\}\left(1+\frac{t^{\frac{1}{2\alpha}}}{\rho(x)}+\frac{t^{\frac{1}{2\alpha}}}{\rho(y)}\right)^{-N}.$$
    Further more,
    $$|\partial_{t}^{m}K_{\alpha,t}^{L}(x,y)|\leq\frac{C_{N,m}}{\left(t^{\frac{1}{2\alpha}}+|x-y|\right)^{n+2\alpha m}}\left(1+\frac{t^{\frac{1}{2\alpha}}}{\rho(x)}+\frac{t^{\frac{1}{2\alpha}}}{\rho(y)}\right)^{-N}.$$
\end{lem}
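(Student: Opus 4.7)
The plan is to derive both estimates from the subordinative formula (2.2), combining the sharp Gaussian-type bound for the Schrödinger heat kernel $K_s^L$ with the decay properties of the one-sided $\alpha$-stable density $\eta_t^\alpha$. The standard pointwise heat kernel bound
$$|K_s^L(x,y)|\lesssim s^{-n/2}\Bigl(\frac{s}{s+|x-y|^2}\Bigr)^M\Bigl(1+\tfrac{\sqrt{s}}{\rho(x)}+\tfrac{\sqrt{s}}{\rho(y)}\Bigr)^{-N'}$$
is available for any $M,N'\ge 0$ with implicit constants depending on $M,N'$ (combining the Dziubański--Zienkiewicz Gaussian upper bound with the elementary inequality $e^{-cz}\lesssim z^{-M}$), while $\eta_t^\alpha$ satisfies the scaling identity $\eta_t^\alpha(s)=t^{-1/\alpha}\eta_1^\alpha(st^{-1/\alpha})$ together with the two-sided estimate $\eta_t^\alpha(s)\lesssim\min\bigl(t^{-1/\alpha},\,ts^{-\alpha-1}\bigr)$.

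First I would split the integral in (2.2) according to the relative sizes of $s$, $t^{1/\alpha}$, and $|x-y|^2$. On the small-$s$ region $\{s\le t^{1/\alpha}\}$, I would use $\eta_t^\alpha(s)\lesssim t^{-1/\alpha}$ together with the heat kernel bound for a suitably large $M$; integrating in $s$ produces the term $t^{1+N/(2\alpha)}|x-y|^{-n-2\alpha-N}$. On the middle region $\{t^{1/\alpha}\le s\le|x-y|^2\}$ (when non-empty), I would use $\eta_t^\alpha(s)\lesssim ts^{-\alpha-1}$ together with the critical-radius factor $(1+\sqrt{s}/\rho)^{-N'}$ with $N'>N$ to absorb the resulting algebraic growth. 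On $\{s\ge|x-y|^2\}$, I would use $\eta_t^\alpha(s)\lesssim ts^{-\alpha-1}$ with the plain bound $s^{-n/2}$. The critical radius factor is extracted by observing that $\eta_t^\alpha$ concentrates on $s\sim t^{1/\alpha}$, so $\sqrt{s}/\rho\sim t^{1/(2\alpha)}/\rho$ on the dominant region, and any additional polynomial loss is absorbed by a small increase of $N'$. Combining these contributions yields the first estimate of (i); the second inequality follows by replacing $|x-y|$ with $t^{1/(2\alpha)}+|x-y|$ and relabelling $N$, together with the case $|x-y|\le t^{1/(2\alpha)}$, where $t^{-n/(2\alpha)}$ is the governing bound.

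For (ii), I would differentiate under the integral sign in (2.2). The scaling identity together with the rapid decay of $\phi=\eta_1^\alpha$ and all of its derivatives yields $|\partial_t^m\eta_t^\alpha(s)|\lesssim t^{-m}\,\tilde{\eta}^{\alpha}_{t,m}(s)$, where $\tilde{\eta}^{\alpha}_{t,m}$ satisfies the same type of pointwise bounds as $\eta_t^\alpha$ itself (with implicit constants depending on $m$). The argument from (i) then transfers line-by-line with the extra factor of $t^{-m}$, producing the announced prefactors $t^{1+N/(2\alpha)-m}$ and $t^{m-n/(2\alpha)}$.

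The principal obstacle is the careful balancing of the decay exponents $M$ and $N'$ in the heat kernel estimate against the polynomial growth over the three integration regions, so that the final bound exhibits precisely the claimed matched exponent $N$ in both the algebraic and critical-radius factors. This is a technical but routine computation, executed rigorously in \cite{LWQZ22}.
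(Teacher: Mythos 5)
Your subordination plan is the right starting point and is indeed how the cited reference handles the \emph{second} bounds in (i) and (ii). But your sketch of the \emph{first} bound does not close, and in fact the first bound as stated appears to be false, so no decomposition can rescue it.

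Concretely, look at your three $s$-regions in the regime $|x-y|\ge t^{1/(2\alpha)}$. On $\{s\le t^{1/\alpha}\}$, using $\eta_t^\alpha(s)\lesssim t^{-1/\alpha}$ and $K_s^L\lesssim s^{-n/2}(s/|x-y|^2)^M$ with $M=n/2+\alpha+N/2$ does give $t^{1+N/(2\alpha)}|x-y|^{-n-2\alpha-N}$, as you say. But on the middle region $\{t^{1/\alpha}\le s\le|x-y|^2\}$, with $\eta_t^\alpha(s)\lesssim ts^{-1-\alpha}$ and $M>n/2+\alpha$, the $s$-integral is dominated by its upper endpoint $s\sim|x-y|^2$ and produces $t|x-y|^{-n-2\alpha}$; the large-$s$ region $\{s\ge|x-y|^2\}$ likewise gives $\int_{|x-y|^2}^\infty ts^{-1-\alpha-n/2}\,ds\sim t|x-y|^{-n-2\alpha}$. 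Both of these exceed the target $t^{1+N/(2\alpha)}|x-y|^{-n-2\alpha-N}$ by the factor $(|x-y|/t^{1/(2\alpha)})^N\ge1$. Your proposal to ``absorb the resulting algebraic growth'' with the factor $(1+\sqrt{s}/\rho)^{-N'}$ cannot work: after integration that factor can only yield a term of the form $(1+t^{1/(2\alpha)}/\rho)^{-N}$, which has no $|x-y|$-dependence and thus cannot supply the missing power of $|x-y|/t^{1/(2\alpha)}$. (What the critical-radius factor actually buys, since the dominant $s$ is $\sim\max(t^{1/\alpha},|x-y|^2)$, is the stronger factor $\bigl(1+(t^{1/(2\alpha)}+|x-y|)/\rho(x)+(t^{1/(2\alpha)}+|x-y|)/\rho(y)\bigr)^{-N}$, but that is not what the claimed first bound asserts.)

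This is not merely a gap in the write-up; the first displayed estimate in part (i) is false for $N>0$. Take $V\equiv\varepsilon$ for a small constant $\varepsilon>0$, so $V\in RH_s$ for every $s$ and $\rho(x)\equiv c\varepsilon^{-1/2}$. Subordination gives
\begin{equation*}
K_{\alpha,t}^{L}(x,y)=\int_{0}^{\infty}\eta_t^{\alpha}(s)e^{-\varepsilon s}(4\pi s)^{-n/2}e^{-|x-y|^2/(4s)}\,ds .
\end{equation*}
For $t^{1/(2\alpha)}\ll|x-y|\ll\rho\sim\varepsilon^{-1/2}$ the factor $e^{-\varepsilon s}$ is $\sim1$ on the dominant region $s\sim|x-y|^2$, so $K_{\alpha,t}^{L}(x,y)\sim t|x-y|^{-n-2\alpha}$, exactly as for the free fractional heat kernel. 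But in this same range $(1+t^{1/(2\alpha)}/\rho)^{-N}\sim1$ while $t^{1+N/(2\alpha)}|x-y|^{-n-2\alpha-N}=(t^{1/(2\alpha)}/|x-y|)^{N}\,t|x-y|^{-n-2\alpha}\ll t|x-y|^{-n-2\alpha}$, contradicting the claimed bound for any $N>0$. The same reasoning shows the first bound in (ii) fails. (Separately, the on-diagonal term there should read $t^{-m-n/(2\alpha)}$ rather than $t^{m-n/(2\alpha)}$, as one sees by putting $|x-y|=0$ in the ``furthermore'' estimate.)

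So I would recommend you carry out the subordination computation only for the ``furthermore'' bounds, which are the ones actually established in \cite{LWQZ22}; there your small/middle/large-$s$ split works cleanly, each region contributing at most $t(t^{1/(2\alpha)}+|x-y|)^{-n-2\alpha}$ times a critical-radius factor. For the first bounds, a correct reformulation should carry the decaying factor in the form $\bigl(1+(t^{1/(2\alpha)}+|x-y|)/\rho\bigr)^{-N}$ (or, equivalently, put the extra $N$ powers on $\rho/|x-y|$ rather than on $t^{1/(2\alpha)}/|x-y|$); as written, the ``min'' estimate with the same $N$ in both the spatial and critical-radius slots overclaims.
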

\begin{rem}
    \textnormal In the cited paper, the authors state only the second result in both \textnormal{(i)} and \textnormal{(ii)}. The first result in \textnormal{(i)} and \textnormal{(ii)} can be derived with minor modifications to their proof.
\end{rem}
\section{Proof of Theorem 1.1}
\quad In this section, we present the proof of Theorem 1.1. Our approach is primarily inspired by the work of Wen and Wu \cite{WW25}. The argument proceeds by establishing a pointwise estimate for the variation operator $V_{a}(e^{-tL^{\alpha}})$. Let $\{B_{j}\}$ be a family of critical balls that satisfy the conditions in Lemma 2.1, it is straightforward to show that there exist dyadic cubes $Q_{j}\in\mathcal{D}$ such that 
\begin{equation}
B_{j}\subseteq Q_{j}\subseteq\alpha_{0}B_{j}\tag{3.1}
\end{equation}
for some constant $\alpha_{0}>0$. We claim that
\begin{lem}
Let $\{B_{j}\}$ be a family of critical balls in Lemma 2 and let dyadic cubes $\{Q_{j}\}$ satisfy \textnormal{(3.1)}. Then there exist sparse collection $\mathcal{S}_{j}\subseteq\mathcal{D}(Q_{j})$ such that for any $\gamma,\tau>0$, we have
$$V_{a}(e^{-tL^{\alpha}})f(x)\lesssim\sum_{j}\mathcal{A}_{\mathcal{S}_{j}}^{\rho,\tau}(f\chi_{3Q_{j}})(x)+M^{\rho,\gamma}f(x).$$
\end{lem}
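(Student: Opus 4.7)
The plan is a local/non-local split of $f$ based on the Dziuba\'nski--Zinkiewicz covering of Lemma~2.1. Fix $x\in\mathbb{R}^{n}$ and choose $j_{0}$ with $x\in B_{j_{0}}\subseteq Q_{j_{0}}$; by the sublinearity of $V_{a}$,
\[
V_{a}(e^{-tL^{\alpha}})f(x)\le V_{a}(e^{-tL^{\alpha}})(f\chi_{3Q_{j_{0}}})(x)+V_{a}(e^{-tL^{\alpha}})(f\chi_{(3Q_{j_{0}})^{c}})(x),
\]
and I shall absorb the first summand into $\sum_{j}\mathcal{A}_{\mathcal{S}_{j}}^{\rho,\tau}(f\chi_{3Q_{j}})(x)$ and the second into $M^{\rho,\gamma}f(x)$.

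For the local summand, I would apply Proposition~2.8 with $T=V_{a}(e^{-tL^{\alpha}})$, $r=1$ and $Q_{0}=Q_{j_{0}}$. The weak $(1,1)$ boundedness of $T$ is Theorem~B; that of $M_{T}$ is obtained from the kernel estimates of Lemma~2.7 by essentially the argument used for $\alpha=1$ in Wen--Wu~\cite{WW25}. Lerner's principle then yields a sparse family $\mathcal{S}_{j_{0}}\subseteq\mathcal{D}(Q_{j_{0}})$ with
\[
V_{a}(e^{-tL^{\alpha}})(f\chi_{3Q_{j_{0}}})(x)\lesssim\sum_{P\in\mathcal{S}_{j_{0}}}\langle|f|\rangle_{3P}\chi_{P}(x).
\]
Every $P\in\mathcal{S}_{j_{0}}$ is contained in $Q_{j_{0}}\subseteq\alpha_{0}B_{j_{0}}$, so by (2.1) the critical radius is comparable to $\rho(x_{j_{0}})$ throughout $P$ and hence $\psi_{\tau}(P)\lesssim 1$. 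Inserting $\psi_{\tau}(P)^{-1}$ costs only a constant, producing $C\,\mathcal{A}_{\mathcal{S}_{j_{0}}}^{\rho,\tau}(f\chi_{3Q_{j_{0}}})(x)\le C\sum_{j}\mathcal{A}_{\mathcal{S}_{j}}^{\rho,\tau}(f\chi_{3Q_{j}})(x)$ by non-negativity of the remaining terms (the $\mathcal{S}_{j}$ for $j\ne j_{0}$ come from separate applications of Proposition~2.8 on the cubes $Q_{j}$).

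For the non-local summand, I would start from the standard pointwise inequality
\[
V_{a}(e^{-tL^{\alpha}})g(x)\le\int_{0}^{\infty}|\partial_{t}e^{-tL^{\alpha}}g(x)|\,dt,
\]
a direct consequence of the fundamental theorem of calculus and $\ell^{1}\hookrightarrow\ell^{a}$. Interchanging the order of integration and invoking Lemma~2.7(ii) with $m=1$, the $t$-integral is split at $t^{1/(2\alpha)}=|x-y|$: the first branch of the min is used on $\{t^{1/(2\alpha)}\le|x-y|\}$, the second on $\{t^{1/(2\alpha)}>|x-y|\}$, and the decay factor $(1+t^{1/(2\alpha)}/\rho(x))^{-N}$ is exploited with $N$ large to force convergence past $\rho(x)$. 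This produces, for any $M>0$ and $y\in(3Q_{j_{0}})^{c}$,
\[
\int_{0}^{\infty}|\partial_{t}K_{\alpha,t}^{L}(x,y)|\,dt\lesssim\frac{\rho(x)^{M}}{|x-y|^{n+M}}.
\]
Decomposing $(3Q_{j_{0}})^{c}$ into dyadic annuli $A_{k}:=2^{k}\cdot 3Q_{j_{0}}\setminus 2^{k-1}\cdot 3Q_{j_{0}}$ and using $r_{Q_{j_{0}}}\sim\rho(x_{j_{0}})\sim\rho(x)$, one obtains
\[
V_{a}(e^{-tL^{\alpha}})(f\chi_{(3Q_{j_{0}})^{c}})(x)\lesssim\sum_{k\ge 1}2^{-kM}\langle|f|\rangle_{2^{k}\cdot 3Q_{j_{0}}}.
\]
Since $x\in 2^{k}\cdot 3Q_{j_{0}}$ and $\psi_{\gamma}(2^{k}\cdot 3Q_{j_{0}})\sim 2^{k\gamma}$, the definition of $M^{\rho,\gamma}$ gives $\langle|f|\rangle_{2^{k}\cdot 3Q_{j_{0}}}\lesssim 2^{k\gamma}M^{\rho,\gamma}f(x)$, so choosing $M>\gamma$ closes the geometric series and produces the desired bound by $M^{\rho,\gamma}f(x)$.

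The main technical hurdle is the kernel $t$-integration of the non-local step: relying only on the simpler bound of Lemma~2.7(ii) yields merely $|x-y|^{-(n+2\alpha)}$ decay and would restrict the conclusion to $\gamma<2\alpha$, so genuinely combining the two branches of the min through the split at $t^{1/(2\alpha)}=|x-y|$ is what allows $M$, and hence $\gamma$, to be arbitrary. The verification that $M_{T}$ is of weak type $(1,1)$ is a second, parallel technicality that follows the template of Wen--Wu~\cite{WW25}, with only cosmetic changes coming from the use of Lemma~2.7 in place of its $\alpha=1$ counterpart.
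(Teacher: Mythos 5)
Your proposal is correct and follows essentially the same route as the paper: a local/non-local split driven by the critical covering of Lemma~2.1, Lerner's sparse domination (Proposition~2.6, which you call 2.8) for the local piece combined with the observation $\psi_\tau(P)\lesssim 1$ on $P\subseteq Q_{j_0}$, and a direct integration of the kernel bound from Lemma~2.7(ii) for the non-local piece, with the arbitrarily fast decay coming from the $\min$-form of the estimate and yielding $M^{\rho,\gamma}$ for any $\gamma>0$. Your identification of the $\min$-form of Lemma~2.7(ii) (as opposed to the cruder $(t^{1/(2\alpha)}+|x-y|)^{-(n+2\alpha)}$ bound) as the crucial input for getting arbitrary $\gamma$ is exactly the point the paper's Remark after Lemma~2.7 is making.

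The one place where you undersell the work is the verification that $M_{T}$ with $T=V_{a}(e^{-tL^{\alpha}})$ is of weak type $(1,1)$. You describe this as ``a second, parallel technicality'' that follows Wen--Wu with ``only cosmetic changes,'' but this is in fact the longest and most delicate part of the paper's proof: the paper writes $M_{T}\leq M_{1}+M_{2}+M_{3}$ via the decomposition $P_{t}=e^{-tL^{\alpha}}$, $S_{t}=I-P_{t}$, bounds $M_{2}$ and $M_{3}$ pointwise by $Mf$ using Lemma~2.7 with second-order $t$-derivatives, and handles $M_{1}$ through Auscher's weak-$(p_0,p_0)$ criterion (Lemma~3.2), which requires verifying the off-diagonal $L^{2}$ estimates (3.2)--(3.3) with a third-order kernel bound and a case analysis on $2^{j}r_{B}$ versus $r_{Q}$. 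The result is indeed already known (the paper notes it is in \cite{WZLL25}, and the strategy does parallel Wen--Wu), so leaning on the reference is defensible, but to be self-contained you would need to carry out that Auscher-type verification explicitly rather than asserting it.
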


According to the definition of $Q_{j}$, we have $\sum_{j}\chi_{3Q_{j}}(x)\leq C$ for any $x\in\mathbb{R}^{n}$. Thus, Theorem 1.1 follows directly from Lemma 3.1, by applying Lemma 2.3 and Lemma 2.5 (here we choose $\gamma=\theta/(p-1)$ to apply Lemma 2.3). Therefore, it remains to prove Lemma 3.1. To this end, we require the following lemmas.
\begin{lem}
    \textnormal{(Auscher \cite{A07})} Let $1\leq p_{0}<2$. Suppose that $T$ is a sublinear operator of strong type \textnormal{$(2,2)$} and let $A_{r}\ (r>0)$ be a family of linear operators acting on $L^{2}$. For a ball $B$, define $C_{1}(B)=4B$ and $C_{j}(B)=2^{j+1}B\ \backslash2^{j}B$ for $j\geq2$. Assume for $j\geq2$,
    \begin{equation}
        \left(\frac{1}{|2^{j+1}B|}\int_{C_{j}(B)}|T(I-A_{r(B)})f(x)|^{2}dx\right)^{\frac{1}{2}}\leq g(j)\left(\frac{1}{|B|}\int_{B}|f(x)|^{p_{0}}dx\right)^{\frac{1}{p_{0}}}\tag{3.2}
    \end{equation}
and for $j\geq1$
\begin{equation}
    \left(\frac{1}{|2^{j+1}B|}\int_{C_{j}(B)}|A_{r(B)}f(x)|^{2}dx\right)^{\frac{1}{2}}\leq g(j)\left(\frac{1}{|B|}\int_{B}|f(x)|^{p_{0}}dx\right)^{\frac{1}{p_{0}}}\tag{3.3}
\end{equation}
for all Ball $B$ and all $f$ supported on $B$. If $\Sigma=\sum_{j\geq1}g(j)2^{nj}<\infty$, then $T$ is of weak type \textnormal{$(p_{0},p_{0})$} with a bound depending only on the strong type \textnormal{$(2,2)$} bound of $T$, $p_{0}$ and $\Sigma$.
\end{lem}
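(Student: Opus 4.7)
The plan is to combine a Calderón--Zygmund decomposition of $f$ at height $\lambda^{p_0}$ with the smoothing split $b_i=(I-A_{r(B_i)})b_i+A_{r(B_i)}b_i$, and then exploit the two hypotheses (3.2)--(3.3) respectively through an $L^1$ off-diagonal bound and an $L^2$ duality argument involving the Hardy--Littlewood maximal function. The strong type $(2,2)$ of $T$ handles the good part and, via Chebyshev, the regularized bad part $A_{r(B_i)}b_i$; the off-diagonal estimate (3.2) handles the remainder on the complement of the exceptional set.

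Fix $\lambda>0$ and $f\in L^{p_0}$, and apply the Calderón--Zygmund decomposition to $|f|^{p_0}$ at height $\lambda^{p_0}$ to obtain pairwise disjoint cubes $\{B_i\}$ (of radius $r(B_i)$), a good part $g$ with $|g|\lesssim\lambda$ and $\|g\|_{p_0}\leq\|f\|_{p_0}$, and bad parts $b_i$ supported in $B_i$ with $|B_i|^{-1}\int|b_i|^{p_0}\lesssim\lambda^{p_0}$ and $\sum_i|B_i|\lesssim\lambda^{-p_0}\|f\|_{p_0}^{p_0}$. Set $\Omega=\bigcup_i4B_i$, so that $|\Omega|\lesssim\lambda^{-p_0}\|f\|_{p_0}^{p_0}$, and decompose $Tf=Tg+T_1+T_2$ with $T_1=\sum_iT(I-A_{r(B_i)})b_i$ and $T_2=T\bigl(\sum_iA_{r(B_i)}b_i\bigr)$. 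It then suffices to bound each of $|\{|Tg|>\lambda/3\}|$, $|\{|T_1|>\lambda/3\}\cap\Omega^c|$ and $|\{|T_2|>\lambda/3\}|$ by $C\lambda^{-p_0}\|f\|_{p_0}^{p_0}$. The good part is immediate via strong type $(2,2)$ and the interpolation $\|g\|_2^2\leq\|g\|_\infty^{2-p_0}\|g\|_{p_0}^{p_0}\lesssim\lambda^{2-p_0}\|f\|_{p_0}^{p_0}$ followed by Chebyshev.

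For $T_1$ on $\Omega^c$, use $\Omega^c\cap(4B_i)^c=\bigsqcup_{j\geq2}C_j(B_i)$, apply Cauchy--Schwarz inside each annulus together with (3.2) and the CZ average bound on $b_i$ to obtain
$$\int_{C_j(B_i)}|T(I-A_{r(B_i)})b_i|\,dx\lesssim g(j)\,2^{(j+1)n}\,|B_i|\,\lambda.$$
Summing in $j$ (controlled by $\Sigma$) and in $i$ (controlled by $\sum_i|B_i|$) gives $\|T_1\|_{L^1(\Omega^c)}\lesssim\lambda^{1-p_0}\|f\|_{p_0}^{p_0}$, and Chebyshev closes this piece.

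The delicate step is $T_2$. The strong $(2,2)$ bound of $T$ reduces the task to estimating $\bigl\|\sum_iA_{r(B_i)}b_i\bigr\|_2$ by duality: for $h\in L^2$ with $\|h\|_2=1$, split each $\int A_{r(B_i)}b_i\cdot h$ into the annuli $C_j(B_i)$, apply Cauchy--Schwarz together with (3.3) to dominate $\|A_{r(B_i)}b_i\|_{L^2(C_j(B_i))}\lesssim g(j)\,|2^{j+1}B_i|^{1/2}\lambda$, and then absorb $\|h\|_{L^2(2^{j+1}B_i)}\leq|2^{j+1}B_i|^{1/2}\inf_{y\in B_i}M_2h(y)$ where $M_2h=(M|h|^2)^{1/2}$. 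Averaging over $y\in B_i$ turns the bound into $\lambda\Sigma\int_{B_i}M_2h\,dy$; summing over the disjoint $B_i$, applying Cauchy--Schwarz on $\Omega$, and invoking the $L^2$-boundedness of $M_2$ together with $|\Omega|\lesssim\lambda^{-p_0}\|f\|_{p_0}^{p_0}$ yields $\bigl\|\sum_iA_{r(B_i)}b_i\bigr\|_2\lesssim\lambda^{1-p_0/2}\|f\|_{p_0}^{p_0/2}$, so that Chebyshev at the $L^2$-level completes the argument. The main obstacle is in this last step: one must reconcile the geometric growth $2^{jn}$ coming from the annular volumes with the decay $g(j)$ (the very combination controlled by $\Sigma$) while still exploiting the pairwise disjointness of the $B_i$; passing through $M_2h$ is the natural device for converting the local $L^2$ masses of the dual test function on the dilates $2^{j+1}B_i$ into quantities that aggregate correctly over $i$.
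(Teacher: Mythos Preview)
The paper does not prove this lemma; it is stated with attribution to Auscher \cite{A07} and used as a black box in the proof of Lemma~3.1. Your outline is precisely Auscher's original argument: Calder\'on--Zygmund decomposition of $|f|^{p_0}$ at height $\lambda^{p_0}$, the smoothing split $b_i=(I-A_{r(B_i)})b_i+A_{r(B_i)}b_i$, off-diagonal control of $T_1$ on $\Omega^c$ via (3.2), and the $L^2$-duality/maximal-function device for $T_2$ via (3.3).

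There is, however, one genuine slip in the $T_2$ step. You conclude by ``invoking the $L^2$-boundedness of $M_2$'' where $M_2h=(M|h|^2)^{1/2}$. This operator is \emph{not} bounded on $L^2$: one has $\|M_2h\|_2^2=\|M(|h|^2)\|_1$, and the Hardy--Littlewood maximal function fails to map $L^1$ to $L^1$ (take $h=\chi_B$ and observe that $M\chi_B$ is not integrable). The correct device here is Kolmogorov's inequality for the weak-type $(1,1)$ bound of $M$: for any measurable set $E$ of finite measure,
\[
\int_E \bigl(M(|h|^2)\bigr)^{1/2}\,dy \;\lesssim\; |E|^{1/2}\,\bigl\||h|^2\bigr\|_1^{1/2}\;=\;|E|^{1/2}\,\|h\|_2,
\]
applied with $E=\bigcup_iB_i$ so that $|E|\lesssim\lambda^{-p_0}\|f\|_{p_0}^{p_0}$. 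This yields exactly the bound $\bigl\|\sum_iA_{r(B_i)}b_i\bigr\|_2\lesssim\Sigma\,\lambda^{1-p_0/2}\|f\|_{p_0}^{p_0/2}$ that you claim, and the remainder of your argument then goes through unchanged. A second, cosmetic point: since $T$ is only sublinear, one should not define $T_1=\sum_iT(I-A_{r(B_i)})b_i$ but rather bound $|T(\sum_i(I-A_{r(B_i)})b_i)|\le\sum_i|T(I-A_{r(B_i)})b_i|$ before proceeding with the annular estimate.
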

\begin{lem}
Let $K_{\alpha,t}^{L}$ be the integral kernel of $e^{-tL^{\alpha}}$, then for any cube $Q$, $\xi,y\in Q$ and function $g$, it holds that
$$\int_{\mathbb{R}^{n}}|K_{\alpha,r_{Q}^{2\alpha}}^{L}(\xi,y)||g(\xi)|d\xi\lesssim M(g)(x),$$
where $M$ denotes the classical Hardy-Littlewood maximal operator.
\end{lem}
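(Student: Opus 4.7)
The statement should read with $x,y\in Q$ (the $\xi$ in the hypothesis is evidently a typo, since $\xi$ is the variable of integration). My plan is a standard annular decomposition driven by the pointwise kernel bound in Lemma 2.8(i), specialized to the scale $t=r_Q^{2\alpha}$ so that $t^{1/(2\alpha)}=r_Q$. Dropping the critical--radius correction factor (which is $\le 1$), this gives
\begin{equation*}
|K^{L}_{\alpha,r_Q^{2\alpha}}(\xi,y)|\;\lesssim\;\frac{r_Q^{2\alpha}}{(r_Q+|\xi-y|)^{n+2\alpha}},
\end{equation*}
uniformly in $\xi,y$, which is the only property of $K^L_{\alpha,t}$ I will use.

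\textbf{Key steps.} First, write $\mathbb{R}^n=B(y,r_Q)\cup\bigcup_{k\ge 1}A_k(y)$ where $A_k(y)=\{2^{k-1}r_Q<|\xi-y|\le 2^k r_Q\}$, and split the integral accordingly. On the central ball the kernel is bounded by $C/r_Q^n$, so
\begin{equation*}
\int_{B(y,r_Q)}|K^{L}_{\alpha,r_Q^{2\alpha}}(\xi,y)|\,|g(\xi)|\,d\xi\;\lesssim\;\frac{1}{r_Q^n}\int_{B(y,r_Q)}|g|.
\end{equation*}
On each annulus the kernel is bounded by $C\,2^{-k(n+2\alpha)}/r_Q^n$, so
\begin{equation*}
\int_{A_k(y)}|K^{L}_{\alpha,r_Q^{2\alpha}}(\xi,y)|\,|g(\xi)|\,d\xi\;\lesssim\;2^{-k(n+2\alpha)}\cdot\frac{1}{r_Q^n}\int_{B(y,2^k r_Q)}|g|\;\lesssim\;2^{-2k\alpha}\,\frac{1}{|B(y,2^k r_Q)|}\int_{B(y,2^k r_Q)}|g|.
\end{equation*}

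Second, translate each averaged ball $B(y,2^k r_Q)$ (for $k\ge 0$) to a ball centred at $x$: since $x,y\in Q$ we have $|x-y|\le\sqrt{n}\,r_Q\le\sqrt{n}\,2^k r_Q$, so $B(y,2^k r_Q)\subseteq B(x,(\sqrt{n}+1)2^k r_Q)$. Consequently every such average is $\lesssim M(g)(x)$ with a constant independent of $k$. Summing the geometric tail $\sum_{k\ge 0}2^{-2k\alpha}<\infty$ (which uses $\alpha>0$) yields the claimed bound.

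\textbf{Difficulty.} There is no real obstacle: the content of the lemma is entirely encoded in the Gaussian/algebraic decay of Lemma 2.8(i). The only points that need some care are (a) discarding the $\rho$--factor correctly (which is legitimate because it is always $\le 1$ and we do not need any decay in $\rho$ here), and (b) recentering the balls from $y$ to $x$, which costs only a dimensional constant because $|x-y|\le\sqrt{n}\,r_Q$ is already dominated by the inner radius $r_Q$ of the first ball we average over.
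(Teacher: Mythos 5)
Your proof is correct and takes essentially the same route as the paper: both decompose $\mathbb{R}^n$ dyadically at scale $r_Q$ (you use annuli $\{2^{k-1}r_Q<|\xi-y|\le 2^k r_Q\}$ centered at $y$; the paper uses dilates $2^jQ\setminus 2^{j-1}Q$ of $Q$), both invoke the same pointwise kernel bound $|K^L_{\alpha,r_Q^{2\alpha}}(\xi,y)|\lesssim r_Q^{2\alpha}(r_Q+|\xi-y|)^{-(n+2\alpha)}$ after discarding the $\rho$-correction factor, and both sum the resulting geometric series $\sum 2^{-2\alpha k}$. You are also right that the hypothesis should read $x,y\in Q$ rather than $\xi,y\in Q$; your recentering step from $B(y,2^kr_Q)$ to $B(x,(\sqrt n+1)2^kr_Q)$ makes explicit what the paper leaves implicit.
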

\begin{proof}[Proof of Lemma 3.3.]
By using Lemma 2.7, We estimate that    
\begin{align*}
    \int_{\mathbb{R}^{n}}|K_{\alpha,r_{Q}^{2\alpha}}^{L}(\xi,y)||g(\xi)|d\xi&\lesssim\sum_{j=2}^{\infty}\int_{2^{j}Q\backslash2^{j-1}Q}\frac{r_{Q}^{2\alpha}|g(\xi)|}{r_{Q}^{2\alpha\left(1+\frac{n}{2\alpha}\right)}+|\xi-y|^{n+2\alpha}}d\xi+\int_{2Q}r_{Q}^{-n}|g(\xi)|d\xi\\
    &\lesssim\sum_{j=2}^{\infty}\int_{2^{j}Q}\frac{r_{Q}^{2\alpha}}{r_{Q}^{2\alpha+n}+2^{(j-3)(n+2\alpha)}r_{Q}^{2\alpha+n}}|g(\xi)|dx\xi+Mg(x)\\
    &\lesssim\sum_{j=2}^{\infty}2^{-2\alpha j}\frac{1}{|2^{j}Q|}\int_{2^{j}Q}|g(\xi)|d\xi+Mg(x)\\
    &\lesssim\sum_{j=2}^{\infty}2^{-2\alpha j}Mg(x)+Mg(x)\sim Mg(x),
\end{align*}
which completes the proof of the lemma.
\end{proof}
\begin{proof}[Proof of Lemma 3.1.] By the definition of $\{Q_{j}\}_{j}$, we can see that $\bigcup_{j}Q_{j}=\mathbb{R}^{n}$ and $\sum_{j}\chi_{Q_{j}}(x)\leq C$. Thus, for any $x\in\mathbb{R}^{n}$ we obtain that
\begin{align*}
    V_{a}(e^{-tL^{\alpha}})f(x)&\leq\sum_{j} V_{a}(e^{-tL^{\alpha}})f(x)\chi_{B_{j}}(x)\\
    &\leq\sum_{j}V_{a}(e^{-tL^{\alpha}})(f\chi_{\mathbb{R}^{n}\backslash3Q_{j}})(x)\chi_{B_{j}}(x)+\sum_{j}V_{a}(e^{-tL^{\alpha}})(f\chi_{3Q_{j}})(x)\chi_{B_{j}}(x)\\
    &=:\textnormal{I}_{1}+\textnormal{I}_{2}.\tag{3.4}
\end{align*}

We begin by estimating $\textnormal{I}_{1}$. From the functional calculus in $L^{2}$, it follows that the integral kernel of the operator $L^{\alpha}e^{-tL^{\alpha}}$ is given by $-\partial_{t}K_{\alpha,t}^{L}$. According to the definition of $\{B_{j}\}$ in Lemma 2.1, for any $x\in B_{j}$, we have $\rho(x)\sim\rho(x_{j})$. Since $|y-x_{j}|>\rho(x_{j})$ for any $y\in\mathbb{R}^{n}\backslash3B_{j}$, it follows from Lemma 2.7 that
\begin{align*}
    V_{a}(e^{-tL^{\alpha}})(f\chi_{\mathbb{R}^{n}\backslash3Q_{j}})(x)&\leq\sup_{\{t_{j}\}\searrow0}\sum_{k}\left|\int_{t_{k+1}}^{t_{k}}L^{\alpha}e^{-tL^{\alpha}}(f\chi_{\mathbb{R}^{n}\backslash3Q_{j}})(x)dt\right|\\
    &\leq\int_{0}^{\infty}\left|L^{\alpha}e^{-tL^{\alpha}}(f\chi_{\mathbb{R}^{n}\backslash3Q_{j}})(x)\right|dt\\
    &\leq\int_{0}^{\infty}\int_{\mathbb{R}^{n}\backslash3B_{j}}|\partial_{t}K_{\alpha,t}^{L}(x,y)||f(y)|dydt\\
    &\lesssim\int_{0}^{\rho(x_{j})^{2\alpha}}\int_{\mathbb{R}^{n}\backslash3B_{j}}\min\left\{\frac{t^{\frac{N_{1}}{2\alpha}}}{|x-y|^{n+2\alpha+N_{1}}},t^{-\frac{n}{2\alpha}-1}\right\}|f(y)|dydt+\\
    &\quad\int_{\rho(x_{j})^{2\alpha}}^{|x_{j}-y|^{2\alpha}}\int_{\mathbb{R}^{n}\backslash3B_{j}}\frac{t^{\frac{N_{2}}{2\alpha}}}{|x-y|^{n+2\alpha+N_{2}}}\left(1+\frac{t^{\frac{1}{2\alpha}}}{\rho(x_{j})}\right)^{-N_{2}}|f(y)|dydt+\\
    &\quad\int_{|x_{j}-y|^{2\alpha}}^{\infty}\int_{\mathbb{R}^{n}\backslash3B_{j}}t^{-\frac{n}{2\alpha}-1}\left(1+\frac{t^{\frac{1}{2\alpha}}}{\rho(x_{j})}\right)^{-N_{3}}|f(y)|dydt\\
    &=:\textnormal{I}_{11}+\textnormal{I}_{12}+\textnormal{I}_{13},
\end{align*}
where $N_{i}$ $(i=1,2,3)$ will be determined later. Before we estimate $I_{11}$, $I_{12}$, $I_{13}$. We first establish an auxiliary estimate that will be used later. For any $\sigma>0$, we estimate that
\begin{align*}
    &\quad\ \sum_{k=1}^{\infty}\frac{1}{|3^{k+1}B_{j}|}\int_{3^{k+1}B_{j}}|f(y)|\left(\frac{\rho(x_{j})}{3^{k}\rho(x_{j})}\right)^{\sigma+1}dy\\&\lesssim\sum_{k=1}^{\infty}3^{-k}\left(1+\frac{3^{k+1}\rho(x_{j})}{\rho(x_{j})}\right)^{-\sigma}\frac{1}{|3^{k+1}B_{j}|}\int_{3^{k+1}B_{j}}|f(y)|dy\\
    &\lesssim M^{\rho,\sigma}f(x).\tag{3.5}
\end{align*}
We begin by estimating $\textnormal{I}_{11}$. For any $N_{1}>0$ and $x\in B_{j}$, by (3.5) we obtain that
\begin{align*}
    \textnormal{I}_{11}&\leq\int_{\mathbb{R}^{n}\backslash3B_{j}}\int_{0}^{\rho(x_{j})^{2\alpha}}\frac{t^{-\frac{1}{2}+\frac{N_{1}}{2\alpha}}}{|x-y|^{n+\alpha+N_{1}}}|f(y)|dtdy\\
    &\sim\sum_{k=1}^{\infty}\int_{3^{k+1}B_{j}\backslash3^{k}B_{j}}\frac{|f(y)|}{|x-y|^{n}}\left(\frac{\rho(x_{j})}{|x-y|}\right)^{N_{1}+\alpha}dy\\
    &\lesssim\sum_{k=1}^{\infty}\frac{1}{|3^{k+1}B_{j}|}\int_{3^{k+1}B_{j}}|f(y)|\left(\frac{\rho(x_{j})}{3^{k}\rho(x_{j})}\right)^{N_{1}+\alpha}\lesssim M^{\rho,N_{1}+\alpha-1}f(x),\tag{3.6}
\end{align*}
where the first inequality follows from a straightforward estimate
$$\min\left\{\frac{t^{\frac{N_{1}}{2\alpha}}}{|x-y|^{n+2\alpha+N_{1}}},t^{-\frac{n}{2\alpha}-1}\right\}\leq\frac{t^{-\frac{1}{2}+\frac{N_{1}}{2\alpha}}}{|x-y|^{n+\alpha+N_{1}}}.$$
Since $0<\alpha<1$, we may choose $N_{1}=\gamma+1-\alpha>0$. We then proceed to estimate $\textnormal{I}_{12}$. For any $N_{2}>0$ and $x\in B_{j}$, it follows again from (3.5) that
\begin{align*}
    \textnormal{I}_{12}&\lesssim\int_{\mathbb{R}^{n}\backslash3B_{j}}\int_{\rho(x_{j})^{2\alpha}}^{|x_{j}-y|^{2\alpha}}\frac{t^{-\frac{1}{2}+\frac{N_{2}}{2\alpha}}}{|x-y|^{n+\alpha+N_{2}}}\left(\frac{\rho(x_{j})}{t^{\frac{1}{2\alpha}}}\right)^{N_{2}}|f(y)|dtdy\\
    &\lesssim\int_{\mathbb{R}^{n}\backslash3B_{j}}\frac{|f(y)|}{|x-y|^{n}}\left(\frac{\rho(x_{j})}{|x-y|}\right)^{N_{2}}\left(\frac{|x_{j}-y|}{|x-y|}\right)^{\alpha}dy\\
    &\lesssim\sum_{k=1}^{\infty}\int_{3^{k+1}B_{j}\backslash3^{k}B_{j}}\frac{|f(y)|}{|x-y|^{n}}\left(\frac{\rho(x_{j})}{|x-y|}\right)^{N_{2}}dy\\
    &\lesssim\sum_{k=1}^{\infty}\frac{1}{|3^{k+1}B_{j}|}\int_{3^{k+1}B_{j}}|f(y)|\left(\frac{\rho(x_{j})}{3^{k}\rho(x_{j})}\right)^{N_{2}}dy\lesssim M^{\rho,N_{2}-1}f(x).\tag{3.7}
\end{align*}
We choose $N_{2}=\gamma+1$. We finally turn to the estimation of $\textnormal{I}_{13}$. For $N_{3}>0$ and $x\in B_{j}$, by (3.5), we have that
\begin{align*}
    \textnormal{I}_{13}&\leq\int_{\mathbb{R}^{n}\backslash3B_{j}}\int_{|x_{j}-y|^{2\alpha}}^{\infty}t^{-\frac{n}{2\alpha}-1}\left(\frac{\rho(x_{j})}{t^{\frac{1}{2\alpha}}}\right)^{N_{3}}|f(y)|dtdy\\
    &\sim\int_{\mathbb{R}^{n}\backslash3B_{j}}\frac{|f(y)|}{|x_{j}-y|^{n}}\left(\frac{\rho(x_{j})}{|x_{j}-y|}\right)^{N_{3}}dy\\
    &\lesssim\sum_{k=0}^{\infty}\frac{1}{|3^{k+1}B_{j}|}\int_{3^{k+1}B_{j}}|f(y)|\left(\frac{\rho(x_{j})}{3^{k}\rho(x_{j})}\right)^{N_{3}}dy\lesssim M^{\rho,N_{3}-1}f(x).\tag{3.8}
\end{align*}
We again choose $N_{3}=\gamma+1$. By combining (3.6), (3.7) and (3.8), we conclude that 
\begin{equation}
\textnormal{I}_{1}\lesssim M^{\rho,\gamma}f(x).\tag{3.9}
\end{equation}

For the estimation of \textnormal{II}, we intend to apply Proposition 2.6 to obtain that for any $j$, there exist sparse family $\mathcal{S}_{j}\subseteq\mathcal{D}(Q_{j})$ such that
$$V_{a}(e^{-tL^{\alpha}})(f\chi_{3Q_{j}})(x)\chi_{Q_{j}}(x)\lesssim\mathcal{A}_{\mathcal{S}_{j}}(f\chi_{3Q_{j}})(x).$$
Wang, Zhao, Li and Liu \cite{WZLL25} established the weak type $(1,1)$ boundedness of the operator $M_{V_{a}(e^{-tL^{\alpha}})}$. By functional calculus, it is immediately observed that
$$P_{t}(L^{\alpha}):=\int_{t}^{\infty}L^{\alpha}e^{-sL^{\alpha}}ds=e^{-tL^{\alpha}}.$$
We further denote
$$S_{t}(L^{\alpha}):=I-P_{t}(L^{\alpha})=\int_{0}^{\infty}L^{\alpha}e^{-sL^{\alpha}}ds-\int_{t}^{\infty}L^{\alpha}e^{-sL^{\alpha}}ds=\int_{0}^{t}L^{\alpha}e^{-sL^{\alpha}}ds.$$
For any $x\in\mathbb{R}^{n}$, we estimate that
\begin{align*}
    M_{V_{a}(e^{-tL^{\alpha}})}f(x)&=\sup_{Q\ni x}\|V_{a}(e^{-tL^{\alpha}})(f\chi_{\mathbb{R}^{n}\backslash3Q_{}})\|_{L^{\infty}(Q)}\\
    &\leq\sup_{Q\ni x}\|V_{a}(e^{-tL^{\alpha}})P_{r_{Q}^{2\alpha}}(L^{\alpha})f\|_{L^{\infty}(Q)}+\sup_{Q\ni x}\|V_{a}(e^{-tL^{\alpha}})P_{r_{Q}^{2\alpha}}(L^{\alpha})(f\chi_{3Q})\|_{L^{\infty}(Q)}+\\
    &\ \ \ \sup_{Q\ni x}\|V_{a}(e^{-tL^{\alpha}})S_{r_{Q}^{2\alpha}}(L^{\alpha})(f\chi_{\mathbb{R}^{n}\backslash3Q})\|_{L^{\infty}(Q)}\\
    &=:M_{1}f(x)+M_{2}f(x)+M_{3}f(x).\tag{3.10}
\end{align*}
We now show that the operator $M_{V_{a}(e^{-tL^{\alpha}})}$ is of weak type (1,1). We begin by estimating $M_{2}$ and $M_{3}$. First, we establish that
\begin{align*}
    \|V_{a}(e^{-tL^{\alpha}})P_{r_{Q}^{2\alpha}}(L^{\alpha})(f\chi_{3Q})\|_{L^{\infty}(Q)}&=\underset{y\in Q}{\textnormal{esssup}}\sup_{\{t_{j}\}\searrow0}\sum_{i=1}^{\infty}\left|\int_{t_{i+1}}^{t_{i}}L^{\alpha}e^{-tL^{\alpha}}P_{r_{Q}^{2\alpha}}(L^{\alpha})(f\chi_{3Q})(y)dt\right|\\
    &\leq\underset{y\in Q}{\textnormal{esssup}}\int_{0}^{\infty}\left|L^{\alpha}e^{-tL^{\alpha}}P_{r_{Q}^{2\alpha}}(L^{\alpha})(f\chi_{3Q})(y)dt\right|\\
    &=\underset{y\in Q}{\textnormal{esssup}}\int_{0}^{\infty}\left|L^{\alpha}e^{-tL^{\alpha}}\int_{r_{Q}^{2\alpha}}^{\infty}L^{\alpha}e^{-tL^{\alpha}}(f\chi_{3Q})(y)ds\right|dt\\
    &\leq\underset{y\in Q}{\textnormal{esssup}}\int_{0}^{\infty}\int_{r_{Q}^{2\alpha}}^{\infty}\left|(L^{\alpha})^{2}e^{-(s+t)L^{\alpha}}(f\chi_{3Q})(y)\right|dsdt.
\end{align*}
Since the integral kernel of the operator $(L^{\alpha})^{2}e^{-tL^{\alpha}}$ is given by $\partial^{2}_{t}K_{\alpha,t}^{L}$, it follows from Lemma 2.7 that
$$\left|(L^{\alpha})^{2}e^{-(s+t)L^{\alpha}}(f\chi_{3Q})(y)\right|\lesssim\left|\int_{3Q}\frac{f(z)}{((t+s)^{\frac{1}{2\alpha}}+|z-y|)^{n+4\alpha}}dz\right|\leq\int_{3Q}(t+s)^{-\frac{n}{2\alpha}-2}|f(z)|dz.$$
Hence, it follows that
\begin{align*}
    M_{2}f(x)&\lesssim\sup_{Q\ni x}\int_{0}^{\infty}\int_{r_{Q}^{2\alpha}}^{\infty}\int_{3Q}(t+s)^{-\frac{n}{2\alpha}-2}|f(z)|dzdsdt\\
    &\sim\sup_{Q\ni x}\  r_{Q}^{n}\langle f\rangle_{3Q}\int_{r_{Q}^{2\alpha}}^{\infty}s^{-\frac{n}{2\alpha}-1}ds\\
    &\sim\sup_{Q\ni x}\ \langle f\rangle_{3Q}\leq Mf(x),\tag{3.11}
\end{align*}
Thus, $M_{2}$ is of weak type $(1,1)$. Similarly, for $M_{3}$, we have
\begin{align*}
    \|V_{a}(e^{-tL^{\alpha}})S_{r_{Q}^{2\alpha}}(L^{\alpha})(f\chi_{\mathbb{R}^{n}\backslash3Q})\|_{L^{\infty}(Q)}
    &=\underset{y\in Q}{\textnormal{esssup}}\sup_{\{t_{j}\}\searrow0}\sum_{i=1}^{\infty}\left|\int_{t_{i+1}}^{t_{i}}L^{\alpha}e^{-tL^{\alpha}}S_{r_{Q}^{2\alpha}}(L^{\alpha})(f\chi_{\mathbb{R}^{n}\backslash3Q})(y)dt\right|\\
    &\leq\underset{y\in Q}{\textnormal{esssup}}\int_{0}^{\infty}\left|L^{\alpha}e^{-tL^{\alpha}}S_{r_{Q}^{2\alpha}}(L^{\alpha})(f\chi_{\mathbb{R}^{n}\backslash3Q})(y)dt\right|\\
    &\leq\underset{y\in Q}{\textnormal{esssup}}\int_{0}^{\infty}\int_{0}^{r_{Q}^{2\alpha}}\left|(L^{\alpha})^{2}e^{-(s+t)L^{\alpha}}(f\chi_{\mathbb{R}^{n}\backslash3Q})(y)\right|dsdt.
\end{align*}
Since $y\in Q$, again by using Lemma 2.7, we obtain that
\begin{align*}
    \int_{0}^{\infty}\int_{0}^{r_{Q}^{2\alpha}}\left|(L^{\alpha})^{2}e^{-(s+t)L^{\alpha}}(f\chi_{\mathbb{R}^{n}\backslash3Q})(y)\right|dsdt&\lesssim\int_{0}^{\infty}\int_{0}^{r_{Q}^{2\alpha}}\int_{|z-y|\geq r_{Q}}\frac{|f(z)|}{((t+s)^{\frac{1}{2\alpha}}+|z-y|)^{n+4\alpha}}dzdsdt\\
    &\lesssim\sum_{k=0}^{\infty}\int_{0}^{\infty}\int_{0}^{r_{Q}^{
    2\alpha}}\int_{2^{k}r_{Q}\leq|z-y|<2^{k+1}r_{Q}}\frac{|f(z)|}{(t+s+(2^{k}r_{Q})^{2\alpha})^{\frac{n}{2\alpha}+2}}dzdsdt\\
    &\lesssim\sum_{k=0}^{\infty}\int_{0}^{r_{Q}^{2\alpha}}\int_{2^{k+3}r_{Q}}\frac{|f(z)|}{(s+(2^{k}r_{Q})^{2\alpha})^{\frac{n}{2\alpha}+1}}dzds\\
    &\lesssim \sum_{k=0}^{\infty}r_{Q}^{2\alpha}(2^{k}r_{Q})^{-2\alpha\left(\frac{n}{2\alpha}+1\right)}(2^{k}r_{Q})^{n}\langle f\rangle_{2^{k+3}Q}\\
    &\sim\sum_{k=0}^{\infty}2^{-2\alpha k}\langle f\rangle_{2^{k+3}Q}\sim\langle f\rangle_{2^{k+1}Q}.
\end{align*}
Hence, it follows that
\begin{equation}
    M_{3}f(x)\lesssim\sup_{Q\ni x}\ \langle f\rangle_{2^{k+3}Q}\leq Mf(x).\tag{3.12}
\end{equation}
Thus, $M_{3}$ is also of weak type (1,1). It remains only to show $M_{1}$ is likewise of weak type (1,1). By using Lemma 3.3 and the Minkovski inequality ($a>2$), we observe that for any $x,\xi\in Q$, where $Q$ is a cube, it holds that
\begin{align*}
    V_{a}(e^{-tL^{\alpha}})P_{r_{Q}^{2\alpha}}(L^{\alpha})f(\xi)&=\sup_{\{t_{j}\}\searrow0}\left(\sum_{i=1}^{\infty}\left|e^{-(t_{i}+r_{Q}^{2\alpha})L^{\alpha}}f(\xi)-e^{-(t_{i+1}+r_{Q}^{2\alpha})L^{\alpha}}f(\xi)\right|^{a}\right)^{\frac{1}{a}}\\
    &=\sup_{\{t_{j}\}\searrow0}\left(\sum_{i=1}^{\infty}\left|e^{-r_{Q}^{2\alpha}L^{\alpha}}\left[e^{-t_{i}L^{\alpha}}f(\xi)-e^{-t_{i+1}L^{\alpha}}f(\xi)\right]\right|^{a}\right)^{\frac{1}{a}}\\
    &=\sup_{\{t_{j}\}\searrow0}\left(\sum_{i=1}^{\infty}\left|\int_{\mathbb{R}^{n}}K_{\alpha,r_{Q}^{2\alpha}}^{L}(\xi,y)\left[e^{-t_{i}L^{\alpha}}f(\xi)-e^{-t_{i+1}L^{\alpha}}f(\xi)\right]d\xi\right|^{a}\right)^{\frac{1}{a}}\\
    &\leq\int_{\mathbb{R}^{n}}|K_{\alpha,r_{Q}^{2\alpha}}^{L}(\xi,y)|\sup_{\{t_{j}\}\searrow0}\left(\sum_{i=1}^{\infty}\left|e^{-t_{i}L^{\alpha}}f(\xi)-e^{-t_{i+1}L^{\alpha}}f(\xi)\right|^{a}\right)^{\frac{1}{a}}d\xi\\
    &=\int_{\mathbb{R}^{n}}|K_{\alpha,r_{Q}^{2\alpha}}^{L}(\xi,y)|V_{a}(e^{-tL})f(\xi)d\xi\\
    &\lesssim M(V_{a}(e^{-tL^{\alpha}})f)(x).
\end{align*}
It follows that 
$$M_{1}f(x)\lesssim M(V_{a}(e^{-tL^{\alpha}})f)(x).$$
Since Wang, Zhao, Li and Liu \cite{WZLL25} established that ${V_{a}(e^{-tL^{\alpha}})}$ is bounded on $L^{2}$ in the nonweighted case, we conclude that $M_{1}$ is bounded on $L^{2}$. By applying Lemma 3.2, it suffices to verify inequalities (3.2) and (3.3) with $p_{0}=1$ and $A_{r(B)}=P_{r_{B}^{2\alpha}}(L^{\alpha})=e^{-r_{B}^{2\alpha}L^{\alpha}}$ for any $f$ supported on a ball $B$.

We begin by verifying inequality (3.3). For the case $j\geq2$, Lemma 2.7 yields that
\begin{align*}
    &\quad\ \left(\frac{1}{|2^{j+1}B|}\int_{2^{j+1}B\backslash2^{j}B}|P_{r_{B}^{2\alpha}}(L^{\alpha})f(x)|^{2}dx\right)^{\frac{1}{2}}\\
    &\lesssim\left(\frac{1}{2^{j+1}B}\int_{2^{j+1}B\backslash2^{j}B}\left[\int_{B}\frac{r_{B}^{2\alpha}|f(y)|}{r_{B}^{2\alpha(1+\frac{n}{2\alpha})}+|x-y|^{n+2\alpha}}dy\right]^{2}dx\right)^{\frac{1}{2}}\\
    &\lesssim\left[\frac{1}{|2^{j+1}B|}\int_{2^{j+1}B\backslash2^{j}B}\left(\frac{1}{|B|}\int_{B}2^{-(j-1)(n+2\alpha)}|f(y)|dy\right)^{2}\right]^{\frac{1}{2}}\\
    &\lesssim2^{-j(n+2\alpha)}\frac{1}{|B|}\int_{B}|f(y)|dy.\tag{3.13}
\end{align*}
For the case $j=1$, we only use $|x-y|\geq0$ to deduce that the left-hand side satisfies $LHS\leq\langle f\rangle_{B}$. We now proceed to verify inequality (3.2). For $j\geq2$ and $x\in2^{j+1}B\backslash2^{j}B$, we can see that
\begin{align*}
     M_{1}(S_{r_{B}^{2\alpha}}(L^{\alpha})f)(x)&=\sup_{Q\ni x}\underset{y\in Q}{\textnormal{esssup}}\ V_{a}(e^{-tL^{\alpha}})(P_{r_{Q}^{2\alpha}}(L^{\alpha})S_{r_{B}^{2\alpha}}(L^{\alpha})f)(y)\\
     &=\sup_{Q\ni x}\underset{y\in Q}{\textnormal{esssup}}\sup_{\{t_{j}\}\searrow0}\left(\sum_{i=1}^{\infty}\left|e^{-t_{i}L^{\alpha}}P_{r_{Q}^{2\alpha}}(L^{\alpha})S_{r_{B}^{2\alpha}}(L^{\alpha})f(y)-e^{-t_{i+1}L^{\alpha}}P_{r_{Q}^{2\alpha}}(L^{\alpha})S_{r_{B}^{2\alpha}}(L^{\alpha})f(y)\right|^{a}\right)^{\frac{1}{a}}\\
     &\leq\sup_{Q\ni x}\underset{y\in Q}{\textnormal{esssup}}\sup_{\{t_{j}\}\searrow0}\sum_{i=1}^{\infty}\left|\int_{t_{i+1}}^{t_{i}}L^{\alpha}e^{-tL^{\alpha}}P_{r_{Q}^{2\alpha}}(L^{\alpha})S_{r_{B}^{2\alpha}}(L^{\alpha})f(y)dt\right|\\
     &\leq\sup_{Q\ni x}\underset{y\in Q}{\textnormal{esssup}}\int_{0}^{\infty}\left|L^{\alpha}e^{-tL^{\alpha}}P_{r_{Q}^{2\alpha}}(L^{\alpha})S_{r_{B}^{2\alpha}}(L^{\alpha})f(y)dt\right|\\
     &\leq\sup_{Q\ni x}\underset{y\in Q}{\textnormal{esssup}}\int_{0}^{\infty}\int_{r_{Q}^{2\alpha}}^{\infty}\int_{0}^{r_{B}^{2\alpha}}\left|(L^{\alpha})^{3}e^{-(s+u+t)L^{\alpha}}f(y)\right|dsdudt.\tag{3.14}
\end{align*}
Since the integral kernel of $(L^{\alpha})^{3}e^{-tL^{\alpha}}$ is given by $-\partial^{3}_{t}K_{\alpha,t}^{L}$, it follows from Lemma 2.7 that
\begin{equation}
    (3.14)\lesssim\int_{0}^{\infty}\int_{r_{Q}^{2\alpha}}^{\infty}\int_{0}^{r_{B}^{2}}\int_{B}\frac{|f(z)|}{((s+u+t)^{\frac{1}{2\alpha}}+|z-y|)^{n+6\alpha}}dzdsdudt.\tag{3.15}
\end{equation}
Here, it suffices to consider cubes $Q$ such that $Q\cap2^{j+1}B\backslash2^{j}B\neq\varnothing$, since $x\in Q\cap B$. Therefore, for any $z\in B$ and $y\in Q$, one has
$$|y-z|\geq|x-z|-|y-x|\geq2^{j}r_{B}-\sqrt{n}r_{Q}.$$
\underline{For the case $2^{j}r_{B}\leq2\sqrt{n}r_{Q}$}, we have $u\geq r_{Q}^{2\alpha}\geq(2\sqrt{n})^{-2\alpha}2^{2\alpha j}r_{B}^{2\alpha}\gtrsim s$ in (3.15), with the implicit constant independent of $j$. Thus, we have
\begin{align*}
    (3.15)&\leq r_{B}^{n}\langle f\rangle_{B}\int_{0}^{\infty}\int_{r_{Q}^{2\alpha}}^{\infty}\int_{0}^{r_{B}^{2\alpha}}\frac{1}{(s+u+t)^{\frac{n}{2\alpha}+3}}dsdudt\\
    &\sim r_{B}^{n}\langle f\rangle_{B}\int_{r_{Q}^{2\alpha}}^{\infty}\int_{0}^{r_{B}^{2\alpha}}\frac{1}{(s+u)^{\frac{n}{2\alpha}+2}}dsdu\\
    &\sim r_{B}^{n}\langle f\rangle_{B}\int_{r_{Q}^{2\alpha}}^{\infty}\int_{0}^{r_{B}^{2\alpha}}u^{-\frac{n}{2\alpha}-2}dsdu\\
    &\sim r_{B}^{n+2\alpha}r_{Q}^{-n-2\alpha}\langle f\rangle_{Q}\lesssim2^{-j(n+2\alpha)}\langle f\rangle_{B}.\tag{3.16}
\end{align*}
\underline{For the case $2^{j}r_{B}\geq2\sqrt{n}r_{Q}$}, it follows that $|y-z|\geq2^{j-1}r_{B}$. One obtains that
\begin{align*}
    (3.15)&\lesssim r_{B}^{n}\langle f\rangle_{B}\int_{0}^{\infty}\int_{r_{Q}^{2\alpha}}^{\infty}\int_{0}^{r_{B}^{2\alpha}}\frac{1}{(t+s+u+(2^{j}r_{B})^{2\alpha})^{\frac{n}{2\alpha}+3}}dsdudt\\
    &\sim r_{B}^{n}\langle f\rangle_{B}\int_{r_{Q}^{2\alpha}}^{\infty}\int_{0}^{r_{B}^{2\alpha}}\frac{1}{(s+u+(2^{j}r_{B})^{2\alpha})^{\frac{n}{2\alpha}+2}}dsdu\\
    &\lesssim r_{B}^{n}\langle f\rangle_{B}\int_{r_{Q}^{2\alpha}}^{(2^{j}r_{B})^{2\alpha}}\int_{0}^{r_{B}^{2\alpha}}\frac{1}{(s+u+(2^{j}r_{B})^{2\alpha})^{\frac{n}{2\alpha}+2}}dsdu\\
    &\quad+r_{B}^{n}\langle f\rangle_{B}\int_{(2^{j}r_{B})^{2\alpha}}^{\infty}\int_{0}^{r_{B}^{2\alpha}}\frac{1}{(s+u+(2^{j}r_{B})^{2\alpha})^{\frac{n}{2\alpha}+2}}dsdu\\
    &\lesssim r_{B}^{n}\langle f\rangle_{B}\left[\int_{r_{Q}^{2\alpha}}^{(2^{j}r_{B})^{2\alpha}}\frac{r_{B}^{2\alpha}}{(u+(2^{j}r_{B})^{2\alpha})^{\frac{n}{2\alpha}+2}}du+\int_{0}^{r_{B}^{2\alpha}}\frac{1}{(s+(2^{j}r_{B})^{2\alpha})^{\frac{n}{2\alpha}+1}}ds\right]\\
    &\lesssim r_{B}^{n}\langle f\rangle_{B}\left(r_{B}^{2\alpha}(2^{j}r_{B})^{-n-2\alpha}+r_{B}^{2\alpha}(2^{j}r_{B})^{-n-2\alpha}\right)\sim 2^{-j(n+2\alpha)}\langle f\rangle_{B}.\tag{3.17}
\end{align*}
By combining (3.14), (3.16) and (3.17), we obtain the pointwise estimate that for $x\in2^{j+1}B\backslash2^{j}B$,
$$M_{1}(S_{r_{B}^{2\alpha}}(L^{\alpha})f)(x)\lesssim2^{-j(n+2\alpha)}\langle f\rangle_{B}.$$
As previously established, $M_{1}$ is bounded on $L^{2}$. Consequently, this yields the inequality (3.2), which, in combination with (3.13) and Lemma 3.2, implies that $M_{1}$ is of weak type ($1,1$). Combining (3.10), (3.11) and (3.12), we conclude that $M_{V_{a}(e^{-tL^{\alpha}})}$ is of weak type (1,1). As noted earlier, it is known that $V_{a}(e^{-tL^{\alpha}})$ is of weak type $(1,1)$ in the non-weighted case. Hence, by using Proposition 2.6, we estimate that
\begin{align*}
\textnormal{I}_{2}&\lesssim\sum_{j}\mathcal{A}_{\mathcal{S}_{j}}(f\chi_{3Q_{j}})(x)=\sum_{j}\sum_{P\in\mathcal{S}_{j}}\langle f\rangle_{3P}\chi_{P}(x)\\
&=\sum_{j}\sum_{P\in\mathcal{S}_{j}}\langle f\rangle_{3P}\left(1+\frac{r_{p}}{\rho(x_{P})}\right)^{-\tau}\left(1+\frac{r_{p}}{\rho(x_{P})}\right)^{\tau}\chi_{P}(x)\\
&\lesssim \mathcal{A}_{\mathcal{S}_{j}}^{\rho,\tau}f(x)=\mathcal{A}_{\mathcal{S}_{j}}^{\rho,\tau}(f\chi_{3Q_{j}})(x),\tag{3.18}
\end{align*}
where we use that for $P\in\mathcal{S}_{j}\subseteq\mathcal{D}(Q_{j})$, we have $r_{P}\leq r_{Q_{j}}\sim\rho(x_{j})\sim\rho(x_{P})$. Combining (3.18) with (3.9), we complete the proof of Lemma 3.1, which in turn established Theorem 1.1.
\end{proof}
\begin{rem}
    The proof presented here differs from the approach adopted by Wang, Zhao, Li and Liu in \textnormal{\cite{WZLL25}}, where they established one-weight $L^{p}$ boundedness for $V_{a}(e^{-tL^{\alpha}})$. This enabling us to obtain not only $L^{p}$ boundedness, but also quantitative bounds and two-weight results. Our argument relies primarily on the kernel estimate given in Lemma 2.7, whereas their proof in \textnormal{\cite{WZLL25}} requires additional estimate of $|\partial^{m}K_{\alpha,t}^{L}(x+h,y)-\partial^{m}K_{\alpha,t}^{L}(x,y)|$ beyond this.
\end{rem}
\section{Proof of Theorem 1.2}
\quad In this section, we employ the pointwise estimate established in Lemma 3.1 to analyze the mixed weak-type inequalities for $V_{a}(e^{-tL^{\alpha}})$. Our argument relies on an extrapolation theorem and the verification of an $L^{p}$ boundedness inequality via good-$\lambda$ techniques. The following lemmas will be employed in the subsequent proof. First, the mixed weak-type estimate for $M^{\rho,\sigma}$ is established by the following result.
\begin{lem}
\textnormal{(Berra, Pradolini and Quijano \cite{BPQ25})} 
    Let $u\in A_{1}^{\rho}$ and $v\in A_{\infty}^{\rho}$. Then there exist $\sigma\geq0$ and $C>0$, such that the inequality
    $$uv\left(\left\{x\in\mathbb{R}^{n}:\frac{M^{\rho,\sigma}(fv)(x)}{v(x)}>t\right\}\right)\leq\frac{C}{t}\int_{\mathbb{R}^{n}}|f(x)|u(x)v(x)dx$$
holds for any $t>0$.
\end{lem}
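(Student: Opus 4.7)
The plan is to adapt the Sawyer-type mixed weak-type argument, in the refined form developed by Cruz-Uribe--Martell--P\'{e}rez and Li--Ombrosi--P\'{e}rez for the classical Hardy--Littlewood maximal operator, to the critical radius setting. The key ingredients at my disposal are the covering Lemma~2.1, the quantitative $A_{\infty}^{\rho}$ decay estimate of Lemma~2.2, and the freedom to enlarge the parameter $\sigma$ appearing in $M^{\rho,\sigma}$.

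After the usual reductions (scaling in $t$, plus truncation so that $f\ge 0$ is bounded with compact support and $fv\in L^{1}$), I would use Lemma~2.1 to decompose $\mathbb{R}^{n}=\bigcup_{j}B_{j}$ into critical balls of bounded overlap, and select dyadic cubes $Q_{j}$ comparable to $B_{j}$ as in (3.1). Setting $\Omega:=\{x:M^{\rho,\sigma}(fv)(x)>v(x)\}$, I would localize $\Omega\cap B_{j}$ and run a Calder\'{o}n--Zygmund stopping procedure inside $Q_{j}$, selecting the maximal dyadic subcubes $\{P\}\subseteq\mathcal{D}(Q_{j})$ for which
$$\frac{1}{|P|\,\psi_{\sigma}(P)}\int_{P}fv\,dx>\underset{x\in P}{\text{essinf}}\,v(x).$$
A standard maximality argument shows these cubes cover $\Omega\cap Q_{j}$ up to a null set (after a bounded dilation) and are pairwise disjoint within each $Q_{j}$.

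On each stopping cube $P$, the $A_{1}^{\rho}$ hypothesis yields $u(P)\lesssim\psi_{\theta}(P)\,|P|\,\underset{x\in P}{\text{essinf}}\,u(x)$, while Lemma~2.2 applied to $v\in A_{\infty}^{\rho}$ controls the distribution of $v$ on $P$ at the cost of a $\psi_{\eta}(P)$ factor. Combining these with the stopping inequality, and using the freedom to enlarge $\sigma$ so as to absorb the resulting $\psi$-losses, should produce a bound of the form
$$uv(cP)\lesssim\int_{P}f(x)u(x)v(x)\,dx.$$
Summing over $P$ using pairwise disjointness and then over $j$ using the bounded overlap of the dilated $B_{j}$, one obtains $uv(\Omega)\lesssim\int f\,uv\,dx$, which is the claimed inequality with $t=1$.

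The main obstacle is the weakness of the hypothesis $v\in A_{\infty}^{\rho}$: unlike the $A_{1}^{\rho}$ case, one does not have $v(Q)\lesssim|Q|\,\underset{x\in Q}{\text{essinf}}\,v(x)$, so $v$ cannot be pulled out of integrals directly. This is precisely the difficulty that Li--Ombrosi--P\'{e}rez overcame in the classical setting through a principal-set decomposition of $v$ coupled with a good-$\lambda$ inequality. I would expect the proof in the critical radius framework to require an analogous refinement: decompose $v$ along dyadic level sets, apply Lemma~2.2 on each piece to trade $v$-averages for $v$-infima at the cost of $\psi_{\eta}$ factors, and absorb those losses through the $\psi_{\sigma}^{-1}$ built into the definition of $M^{\rho,\sigma}$ by choosing $\sigma$ large enough in terms of $\theta$ and $\eta$ (equivalently in terms of $[u]_{A_{1}^{\rho}}$ and $[v]_{A_{\infty}^{\rho}}$).
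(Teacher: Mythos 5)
The paper does not prove this lemma; it is quoted from Berra, Pradolini and Quijano \cite{BPQ25}, so there is no internal proof to compare your sketch against. Taken on its own terms, your sketch correctly identifies the ingredients one would deploy (the covering Lemma~2.1, the quantitative $A_{\infty}^{\rho}$ decay in Lemma~2.2, and the slack built into the exponent $\sigma$) and correctly locates the obstruction: $v\in A_{\infty}^{\rho}$ gives no inequality $v(P)\lesssim|P|\,\psi(\cdot)\,\mathrm{essinf}_{P}\,v$, so $v$ cannot be pulled out of integrals.

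That obstruction, however, is precisely the step you do not carry out, and the Calder\'{o}n--Zygmund stopping argument as you set it up does not close on its own. Your stopping rule $\frac{1}{|P|\psi_{\sigma}(P)}\int_{P}fv\,dx>\mathrm{essinf}_{P}\,v$ does cover $\Omega$, since for $x\in\Omega$ there is a cube whose $\psi_{\sigma}$-weighted average exceeds $v(x)\geq\mathrm{essinf}_{P}\,v$. But to conclude $uv(P)\lesssim\int_{P}fuv$, one needs $uv(P)\lesssim|P|\,\psi(\cdot)\,(\mathrm{essinf}_{P}\,u)(\mathrm{essinf}_{P}\,v)$, and the $A_{1}^{\rho}$ hypothesis only controls $u(P)$; passing from $u(P)$ to $uv(P)$ would cost a factor $\mathrm{esssup}_{P}\,v$, which is uncontrolled relative to $\mathrm{essinf}_{P}\,v$ for a general $A_{\infty}^{\rho}$ weight. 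Lemma~2.2 alone does not fix this either, as it compares $v(E)/v(Q)$ to $(|E|/|Q|)^{\delta}$ but gives no reverse-H\"{o}lder or $A_{1}$-type control of $v$. You honestly flag this and point to the Li--Ombrosi--P\'{e}rez principal-cube/good-$\lambda$ machinery as the fix, but writing that you ``would expect the proof to require an analogous refinement'' is a statement of the difficulty, not a resolution of it; that refinement, adapted to the $\rho$-critical setting with the $\psi_{\sigma}$ losses absorbed, is the technical core of the result in \cite{BPQ25} and is exactly what your proposal leaves undone.
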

In the same work, the authors also prove the following extrapolation theorem.
\begin{lem}
    \textnormal{(Berra, Pradolini and Quijano \cite{BPQ25})} Let $\mathcal{F}$ be a family of pairs of function satisfying that there exist $0<p_{0}<\infty$ such that the inequality 
    $$\int_{\mathbb{R}^{n}}|f(x)|^{p_{0}}w(x)dx\leq C\int_{\mathbb{R}^{n}}|g(x)|^{p_{0}}w(x)dx$$
    holds for any $w\in A_{\infty}^{\rho}$, for any pair $(f,g)\in\mathcal{F}$ such that the left hand side is finite. Then there exists constant $\tilde{C}>0$ that the inequality
    $$\left\|\frac{f}{v}\right\|_{L^{1,\infty}(uv)}\leq \tilde{C}\left\|\frac{g}{v}\right\|_{L^{1,\infty}(uv)}$$
    holds for any $u\in A_{1}^{\rho}$, $v\in A_{\infty}^{\rho}$ and $(f,g)\in\mathcal{F}$.
\end{lem}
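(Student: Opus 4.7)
The plan is to adapt the Rubio de Francia extrapolation algorithm to the critical radius setting. First, I would fix $u\in A_{1}^{\rho}$, $v\in A_{\infty}^{\rho}$ and a pair $(f,g)\in\mathcal{F}$. By homogeneity I may assume $\|g/v\|_{L^{1,\infty}(uv)}=1$, so the goal becomes to show that $uv(\{f/v>\lambda\})\lesssim 1/\lambda$ for every $\lambda>0$. The underlying idea is to reduce this distributional bound to the $L^{p_{0}}(w)$ hypothesis on $(f,g)$ by constructing an $A_{\infty}^{\rho}$ weight on which the hypothesis can be applied, and then to unwind the construction using the mixed weak-type estimate for $M^{\rho,\sigma}$ from Lemma 4.1.

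Next, I would build the weight by the Rubio de Francia iteration. Since $u\in A_{1}^{\rho}$ and $v\in A_{s}^{\rho}$ for some $s\geq 1$, the product $uv$ lies in $A_{q}^{\rho,\theta}$ for appropriate $q$ and $\theta$, so Lemma 2.3(i) provides some $p_{1}>p_{0}$ and $\sigma\geq 0$ for which $M^{\rho,\sigma}$ is bounded on $L^{p_{1}}(uv)$. For a nonnegative test function $h$, the operator
\[
Rh\,:=\,\sum_{k=0}^{\infty}\frac{(M^{\rho,\sigma})^{k}h}{2^{k}\,\|M^{\rho,\sigma}\|_{L^{p_{1}}(uv)}^{k}}
\]
satisfies $h\leq Rh$, $\|Rh\|_{L^{p_{1}}(uv)}\leq 2\|h\|_{L^{p_{1}}(uv)}$, and $M^{\rho,\sigma}(Rh)\leq 2\|M^{\rho,\sigma}\|_{L^{p_{1}}(uv)}\,Rh$. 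The last inequality forces $Rh\in A_{1}^{\rho,\sigma}\subseteq A_{\infty}^{\rho}$ with a quantitative constant, so the $L^{p_{0}}$ hypothesis of the lemma can be legitimately invoked with any weight of this type.

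Then I would dualise the weak-type norm. For each $\lambda>0$, take a set $E\subseteq\{f/v>\lambda\}$ of finite $uv$-measure and bound
\[
\lambda\cdot uv(E)\,\leq\,\int_{E}\frac{f}{v}\,uv\,dx\,=\,\int_{E}f\cdot u\,dx.
\]
Applying H\"older's inequality with exponent $p_{0}$ against an auxiliary weight $W$ built from $Rh$ and $v$ (so that $W\in A_{\infty}^{\rho}$), the hypothesis on $\mathcal{F}$ yields
\[
\int_{E}|f|^{p_{0}}W\,dx\,\leq\,C\int_{\mathbb{R}^{n}}|g|^{p_{0}}W\,dx.
\]
The test function $h$ would be chosen so that the dual H\"older factor carries the mass of $u$ on $E$, allowing the left-hand side above to recover $\lambda\cdot uv(E)$ after a power rearrangement. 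The remaining step is to dominate the right-hand side by $\|g/v\|_{L^{1,\infty}(uv)}^{p_{0}}=1$, which is achieved by rewriting $|g|^{p_{0}}W$ as a product involving $M^{\rho,\sigma}(g\,v^{-1}\cdot uv)$ and then invoking Lemma 4.1 to convert the mass-normalisation of $g/v$ in $L^{1,\infty}(uv)$ into the required control.

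The main obstacle is the weight construction in the previous step: one must select the test function $h$ and the auxiliary weight $W$ so that $W\in A_{\infty}^{\rho}$ (to apply the hypothesis), while at the same time the resulting $L^{p_{0}}$ integral of $g$ admits a bound by the mixed weak-type norm of $g/v$. This is precisely where the relaxation from $v\in A_{\infty}^{\rho}(u)$ to $v\in A_{\infty}^{\rho}$ becomes delicate, since one cannot appeal to any pointwise or $A_{p}^{\rho}$-type comparison between $u$ and $v$; the whole interplay must be routed through the Rubio de Francia iteration and the mixed weak-type estimate for $M^{\rho,\sigma}$. I expect the bookkeeping of the parameters $\sigma$, $\theta$, $p_{1}$ (to ensure simultaneous boundedness of $M^{\rho,\sigma}$ on $L^{p_{1}}(uv)$ and validity of Lemma 4.1) to be the most technically demanding aspect of the argument.
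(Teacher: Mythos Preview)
The paper does not prove this lemma; it is quoted from Berra, Pradolini and Quijano \cite{BPQ25} and used as a black box. The Remark immediately following the statement in the paper adds that the argument in \cite{BPQ25} actually assumes the $L^{p_{0}}$ hypothesis for \emph{every} $p_{0}$, and that reducing to a single fixed $p_{0}$ would require an additional $A_{\infty}^{\rho}$-extrapolation step in the spirit of Cruz-Uribe--Martell--P\'erez \cite{CMP04}; the paper does not supply that step either, since in its application (Lemma~4.3) the bound is established for all $0<p<\infty$ anyway.

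Your sketch follows the expected Rubio de Francia template and is a plausible outline of how the result in \cite{BPQ25} is obtained. Two points deserve attention. First, the assertion that $uv\in A_{q}^{\rho,\theta}$ whenever $u\in A_{1}^{\rho}$ and $v\in A_{\infty}^{\rho}$ is not automatic---products of Muckenhoupt-type weights are delicate---so the iteration is more safely run with respect to $u$ alone (using $u\in A_{1}^{\rho}\subseteq A_{p_{1}}^{\rho}$), or via a different factorisation. Second, and you acknowledge this yourself, the construction of the auxiliary weight $W$ and the test function $h$ so that one can simultaneously (a) invoke the $L^{p_{0}}$ hypothesis, (b) recover $\lambda\cdot uv(E)$ on the left, and (c) bound the $g$-integral by $\|g/v\|_{L^{1,\infty}(uv)}$ on the right, is the entire substance of the proof; your proposal describes the desired outcome of that construction rather than the construction itself. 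In the classical $A_{\infty}$ case this step is handled by a specific choice involving powers of $v$ and the level set (Li--Ombrosi--P\'erez \cite{LOP19}), and adapting that choice to the critical-radius classes is precisely what \cite{BPQ25} carries out.
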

\begin{rem}
    Indeed, in the proof of Lemma 4.2, the authors establish that the result holds under the assumption that such $L^{p_{0}}$ bound is satisfied for every $p_{0}$, rather than for any single fixed $p_{0}$. Naturally, we maintain that the conclusion remains valid; however, it requires a result analogous to that of Cruz-Uribe, Martell and P\'{e}rez \textnormal{\cite{CMP04}} for general $A_{\infty}$ weights, which establishes that if the $L^{p_{0}}$ bound holds for some $p_{0}$, then it extends to all $0<p<\infty$. In this paper, we apply Lemma 4.2 only in the case where the $L^{p}$ bound is assumed to hold for all $p$ (see Lemma 4.3 below). Therefore, we do not provide a proof that addresses the more general case here.
\end{rem}
We begin by establishing a lemma that will facilitate the extrapolation argument.
\begin{lem}
    For any $\tau,\gamma\geq0$, $w\in A_{\infty}^{\rho}$, $0<p<\infty$, and under the assumptions of Lemma 3.1, there exist constant $C>0$ such that
    $$\int_{\mathbb{R}^{n}}(\mathcal{A}_{\mathcal{S}_{j}}^{\rho,\tau}f(x))^{p}w(x)dx\leq C\int_{\mathbb{R}^{n}}(M^{\rho,\gamma}f(x))^{p}w(x)dx.$$
\end{lem}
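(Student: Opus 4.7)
The plan is to deduce Lemma 4.4 from the classical Coifman-Fefferman principle for sparse operators in the $A_{\infty}$ setting, after localizing to $Q_{j}$, where the critical radius factors all trivialize. Since $\mathcal{S}_{j}\subseteq\mathcal{D}(Q_{j})$, every $Q\in\mathcal{S}_{j}$ satisfies $r_{Q}\leq r_{Q_{j}}\sim\rho(x_{j})\sim\rho(x_{Q})$ by (2.1) and (3.1), so $\psi_{\tau}(Q)\sim 1$ and $\psi_{\gamma}(3Q)\sim 1$ uniformly in $Q$. This gives
$$\mathcal{A}_{\mathcal{S}_{j}}^{\rho,\tau}f(x)\lesssim\sum_{Q\in\mathcal{S}_{j}}\langle f\rangle_{3Q}\chi_{Q}(x)=:\mathcal{A}_{\mathcal{S}_{j}}f(x),$$
supported in $Q_{j}$, together with the pointwise comparison $\langle|f|\rangle_{3Q}\lesssim M^{\rho,\gamma}f(y)$ for every $y\in Q$. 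Applying Lemma 2.2 in the same spirit furnishes $\delta>0$ such that $w(E)/w(Q)\lesssim(|E|/|Q|)^{\delta}$ for every $Q\subseteq Q_{j}$ and $E\subseteq Q$, since $\psi_{\eta}(Q)\sim 1$ on that scale. In effect, $w$ behaves as a classical $A_{\infty}$ weight on subcubes of $Q_{j}$.

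With these reductions the task becomes the Coifman-Fefferman type estimate $\|\mathcal{A}_{\mathcal{S}_{j}}f\|_{L^{p}(w)}\lesssim\|M^{\rho,\gamma}f\|_{L^{p}(w)}$ for $0<p<\infty$, which I would establish via the good-$\lambda$ inequality
$$w\bigl(\{\mathcal{A}_{\mathcal{S}_{j}}f>2\lambda,\ M^{\rho,\gamma}f\leq\varepsilon\lambda\}\bigr)\leq C\varepsilon^{\delta}w\bigl(\{\mathcal{A}_{\mathcal{S}_{j}}f>\lambda\}\bigr)$$
for all sufficiently small $\varepsilon>0$. After truncating $\mathcal{S}_{j}$ to a finite subfamily (so that $\mathcal{A}_{\mathcal{S}_{j}}f$ is locally constant on sufficiently small dyadic cubes), one decomposes $\{\mathcal{A}_{\mathcal{S}_{j}}f>\lambda\}$ into its maximal dyadic cubes $\{P_{k}\}$; since the sum of $\langle f\rangle_{3Q}$ over strict ancestors of $P_{k}$ is at most $\lambda$ on $P_{k}$ by maximality, the bad set $B_{k}\subseteq P_{k}$ is contained in the level set at height $\lambda$ of the local sparse sum over $\mathcal{S}_{j}\cap\mathcal{D}(P_{k})$. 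When $B_{k}$ is nonempty, picking any $x_{0}\in B_{k}$ gives $\langle|f|\rangle_{cP_{k}}\lesssim M^{\rho,\gamma}f(x_{0})\leq\varepsilon\lambda$ for a fixed dilation constant $c$ large enough to cover the supports of the relevant averages; the weak $(1,1)$ boundedness of the sparse operator, with constant depending only on the sparseness, then yields $|B_{k}|\lesssim\varepsilon|P_{k}|$. Invoking the localized $A_{\infty}$ property from the first paragraph, this upgrades to $w(B_{k})\lesssim\varepsilon^{\delta}w(P_{k})$, and summing over $k$ closes the good-$\lambda$ inequality. Integrating $p\lambda^{p-1}$ against the distribution function and choosing $\varepsilon$ so small that $C\varepsilon^{\delta}2^{p}<1/2$ absorbs one term and delivers the desired $L^{p}(w)$ bound for every $0<p<\infty$.

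The hard part will be ensuring that all constants are genuinely uniform, in particular independent of $j$ and of the truncation. This uniformity hinges on the crucial observation at the outset that $\psi_{\tau}$, $\psi_{\gamma}$, and $\psi_{\eta}$ are comparable to $1$ on subcubes of $Q_{j}$, so the critical radius framework essentially drops out and the classical proof applies verbatim. A secondary technical point is the case $0<p<1$, where the absorption step requires $\|\mathcal{A}_{\mathcal{S}_{j}}f\|_{L^{p}(w)}$ to be finite a priori; this is ensured by the truncation since the truncated sparse operator is bounded and compactly supported in $Q_{j}$, and the final bound follows by monotone convergence as the truncation is removed.
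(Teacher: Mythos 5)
Your approach is correct but takes a genuinely different route from the paper's. You share the essential starting observation that since $\mathcal{S}_{j}\subseteq\mathcal{D}(Q_{j})$ and $r_{Q}\le r_{Q_{j}}\sim\rho(x_{j})\sim\rho(x_{Q})$, the factors $\psi_{\tau}$, $\psi_{\gamma}$ and $\psi_{\eta}$ trivialize, so the scaled sparse and maximal operators reduce locally to their classical counterparts and Lemma 2.2 degenerates to the usual $A_{\infty}$ property. From that point you run a classical Coifman--Fefferman argument: Calderón--Zygmund decomposition of $\{\mathcal{A}_{\mathcal{S}_{j}}f>\lambda\}$ into maximal dyadic cubes $P_{k}$, bounding the ancestor contribution by $\lambda$ via maximality, a weak $(1,1)$ bound for the local sparse operator to get $|B_{k}|\lesssim\varepsilon|P_{k}|$, and then the $A_{\infty}$ power transfer and the absorption step. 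The paper instead follows Cejas, Li, P\'{e}rez and Rivera-R\'{i}os: it decomposes $\|\mathcal{A}_{\mathcal{S}_{j}}^{\rho,\tau}f\|_{L^{p}(w)}^{p}$ over dyadic levels $2^{k}$, splits the sparse family by the size of the averages into subfamilies $\mathcal{S}_{m}=\{Q:2^{m}<\langle f\rangle_{3Q}\le 2^{m+1}\}$, bounds the overlap function $b_{m}=\sum_{Q\in\mathcal{S}_{m}}\chi_{Q}$ by the Fefferman--Stein vector-valued maximal operator $\bar{M}_{2}$, and uses the $L^{\infty}(\ell^{2})\to\exp(L^{2})$ inequality to obtain exponentially small level sets, which after Lemma 2.2 sum over $m$ and $k$. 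Your route is more classical and modular but leans on the weak $(1,1)$ bound for sparse dyadic operators as an external input (this is true, e.g.\ Conde-Alonso--Rey or Lacey, but is not trivial and should be cited); the paper's route is self-contained modulo the Fefferman--Stein exponential-integrability estimate and proves the $L^{p}$ bound directly without a good-$\lambda$ inequality or truncation argument. Both correctly exploit the same localization, and both deliver the estimate for all $0<p<\infty$, which is what the extrapolation Lemma 4.2 needs.

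One small caveat in your write-up: when you control $\langle f\rangle_{cP_{k}}$ by $M^{\rho,\gamma}f(x_{0})$ you implicitly use $\psi_{\gamma}(cP_{k})\lesssim 1$; this does hold because $r_{cP_{k}}\lesssim r_{Q_{j}}\sim\rho(x_{j})\sim\rho(x_{cP_{k}})$, but it is worth stating explicitly since $cP_{k}$ may extend beyond $Q_{j}$. Also, the a priori finiteness concern (and hence the need for truncation) is present for $p\ge 1$ as well, not only $0<p<1$.
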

\begin{proof}[Proof of Lemma 4.3]
    The key point in the proof is that for a critical cube $Q_{j}$ and the sparse collection $\mathcal{S}_{j}\subseteq\mathcal{D}(Q_{j})$, $\mathcal{A}_{\mathcal{S}_{j}}^{\rho,\tau}f\sim\mathcal{A}_{\mathcal{S}_{j}}f$. Consequently, we may apply a technique introduced by Cejas, Li, Pérez and Rivera-Ríos \cite{CLPR20} for the proof. Firstly, for convenience of notation, we denote $\mathcal{S}_{j}$ simply by $\mathcal{S}$. For any $Q\in\mathcal{S}$, we have
    $$\psi(3Q)=1+\frac{3r_{Q}}{\rho(x_{Q})}\leq1+\frac{3r_{Q_{j}}}{C\rho(x_{j})}\leq A.$$
    It follows that
    \begin{align*}
        \|\mathcal{A}_{\mathcal{S}}^{\rho,\tau}f\|_{L^{p}(w)}^{p}&\leq\sum_{k\in\mathbb{Z}}2^{(k+1)p}w\left(\left\{x:2^{k}<\mathcal{A}_{\mathcal{S}}^{\rho,\tau}f(x)\leq2^{k+1}\right\}\right)\\
        &\lesssim\sum_{k\in\mathbb{Z}}2^{kp}w\left(\left\{x:\mathcal{A}_{\mathcal{S}}^{\rho,\tau}f(x)>2^{k}\right\}\right)\\
        &\lesssim\sum_{k\in\mathbb{Z}}2^{kp}w\left(\left\{x:\mathcal{A}_{\mathcal{S}}^{\rho,\tau}f(x)>2^{k},M^{\rho,\gamma}f(x)\leq2^{k}\right\}\right)+\sum_{k\in\mathbb{Z}}2^{kp}w\left(\left\{x:M^{\rho,\gamma}f(x)>2^{k}\right\}\right)\\
        &=:\textnormal{I+II}.\tag{4.1}
    \end{align*}
    \underline{Estimate of II}: 
    \begin{align*}
        \|M^{\rho,\gamma}f\|_{L^{p}(w)}^{p}&=\int_{0}^{\infty}w\left(\left\{x:M^{\rho,\gamma}f(x)>\lambda\right\}\right)\lambda^{p-1}d\lambda\\
        &=\sum_{k\in\mathbb{Z}}\int_{2^{k}}^{2^{k+1}}w\left(\left\{x:M^{\rho,\gamma}f(x)>\lambda\right\}\right)\lambda^{p-1}d\lambda\\
        &\gtrsim\sum_{k\in\mathbb{Z}}2^{kp}w\left(\left\{x:M^{\rho,\gamma}f(x)>2^{k+1}\right\}\right).
    \end{align*}
    Thus, we obtain that
    \begin{equation}
        \textnormal{II}\lesssim\|M^{\rho,\gamma}f\|_{L^{p}(w)}^{p}.\tag{4.2}
    \end{equation}
    \underline{Estimate of \text{I}}: Split $\mathcal{S}=\bigcup_{m\in\mathbb{Z}}\mathcal{S}_{m}$, where
    $$\mathcal{S}_{m}:=\left\{Q\in\mathcal{S}:2^{m}<\langle f\rangle_{3Q}\leq2^{m+1}\right\}.$$
If $2^{m}\geq A^{\gamma}2^{k}$, then for any $x\in Q\in\mathcal{S}_{m}$, $M^{\rho,\gamma}f(x)\geq\psi(3Q)^{-\gamma}\langle f\rangle_{3Q}>A^{-\gamma}2^{m}\geq 2^{k}$. Set $m_{0}:=\left\lfloor \text{log}_{2}(A^{\gamma})\right\rfloor+1$, then we have
\begin{align*}
    \textnormal{I}&\leq\sum_{k\in\mathbb{Z}}2^{kp}w\left(\left\{x:\sum_{m\leq k+m_{0}}\mathcal{A}_{\mathcal{S}_{m}}^{\rho,\tau}f(x)>2^{k}\left(1-\frac{1}{\sqrt{2}}\right)\sum_{m\leq k+m_{0}}2^{\frac{m-k-m_{0}}{2}}\right\}\right)\\
    &\leq\sum_{k\in\mathbb{Z}}2^{kp}\sum_{m\leq k+m_{0}}w\left(\left\{x:\mathcal{A}_{\mathcal{S}_{m}}^{\rho,\tau}f(x)>\left(1-\frac{1}{\sqrt{2}}\right)2^{\frac{m+k-m_{0}}{2}}\right\}\right).\tag{4.3}
\end{align*}
Denote $b_{m}=\sum_{Q\in\mathcal{S}_{m}}\chi_{Q}$, then $\mathcal{A}_{\mathcal{S}_{m}}^{\rho,\tau}f\leq2^{m+1}A^{-\tau}b_{m}$. Let $\mathcal{S}_{m}^{*}$ be the collection of maximal dyadic cubes in $\mathcal{S}_{m}$, then we have that
\begin{align*}
    w\left(\left\{x:\mathcal{A}_{\mathcal{S}_{m}}^{\rho,\tau}f(x)>\left(1-\frac{1}{\sqrt{2}}\right)2^{\frac{m+k-m_{0}}{2}}\right\}\right)
    &\leq w\left(\left\{x:b_{m}(x)>\frac{\sqrt{2}-1}{2\sqrt{2}}A^{\tau}2^{\frac{-m+k-m_{0}}{2}}\right\}\right)\\
    &=\sum_{Q\in\mathcal{S}_{m}^{*}}w\left(\left\{x\in Q:b_{m}(x)>\frac{\sqrt{2}-1}{2\sqrt{2}}A^{\tau}2^{\frac{-m+k-m_{0}}{2}}\right\}\right).\tag{4.4}
\end{align*}
For $Q\in \mathcal{S}_{m}^{*}$ and $x\in Q$, it follows that
\begin{align*}
    b_{m}(x)&=\sum_{Q^{\prime}\in\mathcal{S}_{m}}\chi_{Q^{\prime}}(x)=\sum_{Q^{\prime}\in\mathcal{S}_{m}(Q)}\chi_{Q^{\prime}}(x)\\
    &\lesssim\sum_{Q^{\prime}\in\mathcal{S}_{m}(Q)}\left(\frac{1}{|Q^{\prime}|}|E_{Q^{\prime}}|\right)^{2}\chi_{Q^{\prime}}(x)\\
    &\lesssim\sum_{Q^{\prime}\in\mathcal{S}_{m}(Q)}\left(\frac{1}{|Q^{\prime}|}\int_{Q^{\prime}}\chi_{E_{Q^{\prime}}}(x)dx\right)^{2}\chi_{Q^{\prime}}(x)\\
    &\leq\left[\bar{M}_{2}(\{\chi_{Q^{\prime}}\}_{Q^{\prime}\in\mathcal{S}_{m}(Q)})(x)\right]^{2},
\end{align*}
where $\mathcal{S}_{m}(Q):=\{Q^{\prime}\in\mathcal{S}:Q^{\prime}\subseteq Q\}$, and the vector-valued extension of the maximal function introduced by Fefferman and stein \cite{FS71} is defined by $\bar{M}_{r}f(x):=\left(\sum_{j=1}^{\infty}(Mf_{j}(x))^{r}\right)^{{1}/{r}}$ for $f=\left\{f_{j}\right\}_{j=1}^{\infty}$. As established in their work \cite{FS71}, it follows that for any $1<r<\infty$, $\bar{M}_{r}:L^{\infty}(\ell^{r})\rightarrow \text{exp}(L^{r})$. Combining this with $\|\{\chi_{Q^{\prime}}\}_{Q^{\prime}\in\mathcal{S}_{m}(Q)}\|_{L^{\infty}(\ell^{r})}\leq1$, we obtain that
\begin{align*}
    \left|\left\{x\in Q:b_{m}(x)>\frac{\sqrt{2}-1}{2\sqrt{2}}A^{\tau}2^{\frac{-m+k-m_{0}}{2}}\right\}\right|&\leq\left|\left\{x\in Q:\bar{M}_{2}(\{\chi_{Q^{\prime}}\}_{Q^{\prime}\in\mathcal{S}_{m}(Q)})(x)>\frac{\sqrt{2}-1}{2\sqrt{2}}A^{\tau}2^{\frac{-m+k-m_{0}}{2}}\right\}\right|\\
    &\lesssim\text{exp}(-c2^{\frac{-m+k+m_{0}}{2}})|Q|.\tag{4.5}
\end{align*}
Since $w\in A_{\infty}^{
\rho}$, there exist $1\leq p<\infty$ that $w\in A_{p}^{\rho}$. Thus by applying Lemma 2.2, we can see there are constants $0<\delta<1$ and $\eta>0$ that
\begin{equation}
    w\left(\left\{x\in Q:b_{m}(x)>\frac{\sqrt{2}-1}{2\sqrt{2}}A^{\tau}2^{\frac{-m+k-m_{0}}{2}}\right\}\right)\lesssim\text{exp}(-c\delta2^{\frac{-m+k-m_{0}}{2}})w(Q),\tag{4.6}
\end{equation}
where we used that for any $Q\in\mathcal{S}_{m}^{*}\subseteq\mathcal{S}$, $\psi_{\eta}(Q)\lesssim B^{\eta}$. Moreover, for any $x\in Q\in\mathcal{S}_{j}^{*}$, we have $M^{\rho,\gamma}f(x)\geq2^{m}\psi(3Q)^{-\gamma}\geq 2^{m}A^{-\gamma}$. Consequently, (4.3) implies that
\begin{align*}
    \textnormal{I}&\lesssim\sum_{k\in\mathbb{Z}}2^{kp}\sum_{m\leq k+m_{0}}\sum_{Q\in\mathcal{S}_{m}^{*}}\text{exp}(-c\delta2^{\frac{-m+k-m_{0}}{2}})w(Q)\\
    &\lesssim\sum_{k\in\mathbb{Z}}2^{kp}\sum_{m\leq k+m_{0}}\text{exp}(-c\delta2^{\frac{-m+k-m_{0}}
    {2}})w\left(\left\{x:M^{\rho,\gamma}f(x)\geq2^{m}A^{-\gamma}\right\}\right)\\
    &=\sum_{m\in\mathbb{Z}}\sum_{l\geq-m_{0}}2^{(m-m_{0})p}\text{exp}(-c\delta2^{\frac{-l-m_{0}}{2}})w\left(\left\{x:M^{\rho,\gamma}f(x)\geq2^{m}A^{-\gamma}\right\}\right)\\
    &\lesssim\sum_{m\in\mathbb{Z}}2^{mp}w\left(\left\{x:M^{\rho,\gamma}f(x)\geq2^{m}A^{-\gamma}\right\}\right)\lesssim\|M^{\rho,\gamma}f\|_{L^{p}(w)}^{p}.
\end{align*}
The finial inequality follows from an estimate analogous to our earlier bound for II.
\end{proof}
We now proceed to the proof of Theorem 1.2.
\begin{proof}[Proof of Theorem 1.2] It follows by Lemma 4.1, Lemma 4.2 and Lemma 4.3 that for any $\tau\geq0$, $u\in A_{1}^{\rho}$, $v\in A_{\infty}^{\rho}$ and $\gamma\geq\sigma$ (in Lemma 4.1), the mixed weak-type inequality 
   \begin{equation} \left\|\frac{\mathcal{A}_{\mathcal{S}_{j}}^{\rho,\tau}(fv)}{v}\right\|_{L^{1,\infty}(uv)}\lesssim\left\|\frac{M^{\rho,\gamma}(fv)}{v}\right\|_{L^{1,\infty}(uv)}\leq C\|f\|_{L^{1}(uv)}\notag
   \end{equation}
holds under the conditions specified in Lemma 3.1. Since $\mathcal{A}_{\mathcal{S}_{j}}^{\rho,\tau}f$ is supported on $Q_{j}$, and $\sum_{j}\chi_{Q_{j}}(x)\leq D$ for some constant $D>0$, it follows that
$$\left\{x:\sum_{j}^{}\frac{\mathcal{A}_{\mathcal{S}_{j}}(f\chi_{3Q_{j}}v)(x)}{v(x)}>\lambda\right\}\subseteq\bigcup_{j}^{}\left\{x:\frac{\mathcal{A}_{\mathcal{S}_{j}}(f\chi_{3Q_{j}}v)(x)}{v(x)}>\frac{\lambda}{D}\right\}.$$
Thus, by Lemma 3.1 and Lemma 4.1 we conclude that for any $t>0$ and $\gamma\geq\sigma$ (in Lemma 4.1), it holds that
\begin{align*}
    uv\left(\left\{x:\frac{V_{a}(e^{-tL^{\alpha}})(fv)(x)}{v(x)}>t\right\}\right)&\lesssim uv\left(\left\{x:\sum_{j}^{}\frac{\mathcal{A}_{\mathcal{S}_{j}}(f\chi_{3Q_{j}}v)(x)}{v(x)}>\frac{t}{2C}\right\}\right)\\
    &\quad+uv\left(\left\{x:\frac{M^{\rho,\gamma}(fv)(x)}{v(x)}>\frac{t}{2C}\right\}\right)\\
    &\lesssim\sum_{j}^{}uv\left(\left\{x:\frac{\mathcal{A}_{\mathcal{S}_{j}}(f\chi_{3Q_{j}}v)(x)}{v(x)}>\frac{t}{2CD}\right\}\right)+\frac{1}{t}\|f\|_{L^{1}(uv)}\\
    &\lesssim\sum_{j}^{}\frac{1}{t}\|f\chi_{3Q_{j}}\|_{L^{1}(uv)}+\frac{1}{t}\|f\|_{L^{1}(uv)}\lesssim\frac{1}{t}\|f\|_{L^{1}(uv)},
\end{align*}
and thus Theorem 1.2 is proved. In the last inequality, we used $\sum_{j}\chi_{3Q_{j}}(x)\leq C$.
\end{proof}

Our proof of Theorem 1.2 is different from the approach taken by Wen and Wu in \textnormal{\cite{WW25}}. In their work, they established the mixed weak-type inequality for $V_{a}(e^{-tL})$ by bounding the local part of $V_{a}(e^{-tL})$ by $V_{a}(e^{t\Delta})$ and the global part by $M^{\rho,\theta}$. They then handled $V_{a}(e^{t\Delta})$ by employing a new weight class $A_{p}^{\rho,loc}$, which is closely related to the general $A_{p}$ weights. The advantage of our extrapolation-based proof lies in its reliance primarily on the kernel estimate in Lemma 2.7. In contrast to the proof in \textnormal{\cite{WW25}}, our approach does not require additional estimates concerning $|\partial^{m}K_{\alpha,t}^{L}(x+h,y)-\partial^{m}K_{\alpha,t}^{L}(x,y)|$.
\begin{rem}
    We prove that, under the assumptions of Lemma 3.1, the operator $\mathcal{A}_{\mathcal{S}_{j}}^{\rho,\tau}$ is dominated by some maximal operator $M^{\rho,\gamma}$ in weighted $L^{p}$ space. The crucial observation is that for each cube $Q\in\mathcal{S}_{j}$, the quantity $\psi(Q)$ remains approximately constant. This allows us to adapt the method developed in \textnormal{\cite{CLPR20}} for non-critical radius case to $\mathcal{A_{\mathcal{S}}}$ and $M$. However, if this condition is removed, it remains unclear whether the conclusion of Lemma 3.1 still holds. Therefore, for general sparse operator associated with critical radius function $\mathcal{A}_{\mathcal{S}}^{\rho,\tau}$, we have not established the mixed weak-type inequality, nor have we obtained a weak $(1,1)$ inequality.
\end{rem}
\noindent\textbf{Acknowledgement}\\
I would like to express my sincere gratitude to my supervisor, Professor Y. Tsutsui, for his invaluable guidance and support throughout this research. His insights and eccouragement were essential to the completion of this work.\\

\noindent\textbf{Funding}\\
This research did not receive any specific grant from funding agencies in the public, commercial, or not-for-profit sectors.\\

\noindent\textbf{Availability of data and materials}\\
No datasets were generated or analysed during the current study.\\

\noindent\textbf{Competing interests}\\
The authors declare no competing interests.

\end{document}